\newtheorem{theorem}{Theorem}[section]
\newtheorem{lemma}[theorem]{Lemma}
\newtheorem{proposition}[theorem]{Proposition}
\newtheorem{claim}[theorem]{Claim}
\theoremstyle{definition}
\newtheorem{corollary}[theorem]{Corollary}
\newtheorem{definition}[theorem]{Definition}
\newtheorem*{remark*}{Remark}
\newtheorem*{notation*}{Notation}
\newcommand{\eps}{\varepsilon}
\newcommand{\vphi}{\varphi}
\newcommand{\Cb}{\mathbb{C}}
\newcommand{\Eb}{\mathbb{E}}
\newcommand{\Nb}{\mathbb{N}}
\newcommand{\Rb}{\mathbb{R}}
\newcommand{\Zb}{\mathbb{Z}}
\newcommand{\Z}{\mathbb{Z}}
\begin{document}
\title{On the sharpness of a three circles theorem
for discrete harmonic functions}
\author{Gabor Lippner and Dan Mangoubi}
\date{}
\maketitle
\begin{abstract}
Any three circles theorem for discrete harmonic functions must contain
an inherent error term. In this paper we find the sharp error term in an $L^2$-three circles theorem for harmonic functions defined in~$\Zb^2$. 
The proof is highly indirect due to combinatorial obstacles
and cancellations phenomena.
We exploit Newton interpolation methods and recursive arguments.
\end{abstract}

\section{Introduction}
Let $u$ be a discrete harmonic function defined in $\Zb^d$. Let $(S_n)_{n=0}^{\infty}$ be the simple random walk starting at~$0$ and let $Q_u(n):=\Eb |u(S_n)|^2$ measure the $L^2$-growth of $u$.
We recall that $Q_u$ is absolutely monotonic.
\begin{theorem}[\cite{lip-man-2015}]
\label{thm:Q-absolutely-monotonic}
For all $k\in\Nb$ 
$$Q_u^{(k)}\geq 0\ ,$$
where $Q_u^{(k)}$ denotes the $k$th forward discrete derivative.
\end{theorem}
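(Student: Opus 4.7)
The plan is to reduce the claim to a pointwise inequality $\Delta^k(u^2) \geq 0$ on $\Zb^d$, where $\Delta := P - I$ is the discrete Laplacian associated with the simple random walk transition operator $Pf(x) := \tfrac{1}{2d}\sum_{|e|=1} f(x+e)$.

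First I would rewrite $Q_u(n) = P^n(u^2)(0)$. Since $\Delta$ and $P$ commute, this gives
\[
Q_u^{(k)}(n) = P^n\bigl(\Delta^k(u^2)\bigr)(0),
\]
and since $P^n$ preserves non-negativity, it suffices to show $\Delta^k(u^2) \geq 0$ pointwise on $\Zb^d$ for every discrete harmonic $u$ and every $k \geq 0$.

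The engine of the argument is the standard variance identity: for harmonic $u$,
\[
\Delta(u^2)(x) \;=\; \frac{1}{2d}\sum_{|e|=1}\bigl(u(x+e)-u(x)\bigr)^2,
\]
obtained by expanding the square and using $u(x)=\tfrac{1}{2d}\sum_e u(x+e)$. The crucial observation is that the discrete directional derivative $D_e u(x) := u(x+e)-u(x)$ is itself discrete harmonic, since $P$ commutes with translations and $Pu=u$. Hence $\Delta(u^2)$ is expressed as a non-negative combination of squares of harmonic functions.

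I would then prove the pointwise claim by induction on $k$. The base $k=0$ is trivial. For the inductive step, assuming $\Delta^{k-1}(v^2) \geq 0$ for every discrete harmonic $v$, the identity above yields
\[
\Delta^k(u^2) \;=\; \frac{1}{2d}\sum_{|e|=1} \Delta^{k-1}\bigl((D_e u)^2\bigr) \;\geq\; 0,
\]
since each $D_e u$ is harmonic and the hypothesis applies to it. I do not anticipate a serious obstacle: the induction closes precisely because the variance identity for harmonic functions reproduces the shape of the hypothesis at each step, with a new harmonic function $D_e u$ in place of $u$.
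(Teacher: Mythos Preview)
Your proposal is correct and follows essentially the same approach as the original proof in \cite{lip-man-2015}, whose key ingredients are reproduced in this paper as equations~\eqref{eqn:derivative-laplacian} and~\eqref{eqn:laplace-powers-harmonic-fcn}: your inductive step, when unrolled, yields precisely the identity $\Delta^{l}(|u|^2)=(2d)^{-l}\sum_{s_1,\ldots,s_l}|\partial_{s_1}\cdots\partial_{s_l}u|^2$. One small correction: since $u:\Zb^d\to\Cb$ is complex-valued, you should work with $|u|^2$ rather than $u^2$ throughout; the variance identity then reads $\Delta(|u|^2)(x)=\tfrac{1}{2d}\sum_{|e|=1}|u(x+e)-u(x)|^2$, and the rest of your argument goes through unchanged.
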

As a formal corollary we concluded a three circles theorem
with an inherent error term. 
\begin{corollary}[\cite{lip-man-2015}*{Proof of Theorem 1.11}]
\label{cor:3ct}
Let $u:\Zb^d\to\Cb$ be harmonic. Then
\begin{equation}
\label{ineq:3ct-error}
\forall n\in\Nb\quad Q_u(2n)\leq 2\sqrt{Q_u(n)Q_u(4n)}+2^{-\sqrt{n}}Q_u(4n)\ .
\end{equation}
\end{corollary}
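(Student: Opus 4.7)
The plan is to exploit Theorem \ref{thm:Q-absolutely-monotonic} through the Newton forward-difference expansion
$$
Q_u(m)=\sum_{k=0}^{m}a_k\binom{m}{k},\qquad a_k:=\Delta^k Q_u(0)\geq 0,
$$
and to split the $m=2n$ expansion at the threshold $K:=\lceil\sqrt{n}\,\rceil$:
$$
Q_u(2n)=\underbrace{\sum_{k<K}a_k\binom{2n}{k}}_{A}+\underbrace{\sum_{k\geq K}a_k\binom{2n}{k}}_{B}.
$$
The aim is to bound $A\leq 2\sqrt{Q_u(n)Q_u(4n)}$ and $B\leq 2^{-\sqrt{n}}Q_u(4n)$.

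The tail $B$ is immediate. Since $(2n-j)/(4n-j)\leq 1/2$ for every $j\geq 0$, one gets $\binom{2n}{k}\leq 2^{-k}\binom{4n}{k}$, whence
$$
B\leq 2^{-K}\sum_{k\geq K}a_k\binom{4n}{k}\leq 2^{-\sqrt{n}}Q_u(4n).
$$

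The head $A$ requires a near-log-convexity estimate for binomials. The key algebraic identity $(2n-j)^2-(n-j)(4n-j)=nj$ rewrites the relevant ratio as
$$
\frac{\binom{n}{k}\binom{4n}{k}}{\binom{2n}{k}^2}=\prod_{j=1}^{k-1}\left(1-\frac{nj}{(2n-j)^2}\right).
$$
For $k<K$ the factors are non-negative, and $\sum_{j=1}^{k-1}\tfrac{nj}{(2n-j)^2}\leq\sum_{j=1}^{k-1}\tfrac{j}{n}\leq 1/2$ (using $(2n-j)^2\geq n^2$ when $j\leq\sqrt{n}$). Weierstrass's product inequality then yields $\binom{2n}{k}^2\leq 2\binom{n}{k}\binom{4n}{k}$, so Cauchy--Schwarz on $A$ gives
$$
A\leq \sqrt{2}\sum_{k<K}\sqrt{a_k\binom{n}{k}}\,\sqrt{a_k\binom{4n}{k}}\leq\sqrt{2}\,\sqrt{Q_u(n)Q_u(4n)}\leq 2\sqrt{Q_u(n)Q_u(4n)}.
$$
Combining the head and tail estimates yields \eqref{ineq:3ct-error}.

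The sole non-routine point is the threshold calibration. The head estimate requires $k^2/n\lesssim 1$ to keep the accumulated quadratic corrections $\sum_{j<k}\tfrac{nj}{(2n-j)^2}\asymp k^2/n$ under control, while the tail estimate requires $K\geq\sqrt{n}$ to produce the stated exponential decay. Taking $K=\lceil\sqrt{n}\,\rceil$ is the unique dyadic choice that balances the two, and it is precisely what produces the $\sqrt{n}$ in the exponent of the error term.
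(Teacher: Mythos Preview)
Your argument is correct. The paper does not give its own proof of this corollary here; it is quoted from \cite{lip-man-2015} and the only hint given in the text is that ``the estimate~(\ref{ineq:3ct-error}) holds for all absolutely monotonic functions $f:\Nb\to\Rb$ in place of $Q_u$''. This is precisely the mechanism you exploit: the only property of $Q_u$ you use is the non-negativity of the Newton coefficients $a_k=\Delta^kQ_u(0)$ supplied by Theorem~\ref{thm:Q-absolutely-monotonic}. Your decomposition at $K=\lceil\sqrt{n}\rceil$, the termwise tail bound $\binom{2n}{k}\le 2^{-k}\binom{4n}{k}$, the head bound $\binom{2n}{k}^2\le 2\binom{n}{k}\binom{4n}{k}$ via the identity $(2n-j)^2-(n-j)(4n-j)=nj$ and the Weierstrass product inequality, followed by Cauchy--Schwarz, all go through cleanly. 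In fact your head estimate gives the constant $\sqrt{2}$ rather than~$2$, so you obtain a marginally sharper inequality than the one stated. The closing paragraph on the calibration of $K$ is a nice conceptual remark and correctly identifies why $\sqrt{n}$ is the natural exponent, consonant with the sharpness result that the rest of the paper is devoted to.
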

The estimate~(\ref{ineq:3ct-error})
holds for all absolutely monotonic functions \mbox{$f:\Nb\to\Rb$}
in place of $Q_u$. On the one hand, if we let $\beta>1/2$, we can always find
absolutely monotonic functions which give rise to an error term larger
than $2^{-n^{\beta}}$, e.g., $f(n)=\binom{n}{k}$ where $k$ is large (see Theorem~\ref{thm:binomial-family}).
On the other hand, there exist absolutely monotonic functions,~$f$, which satisfy \mbox{$f(2n)\leq \sqrt{f(n)f(4n)}$} without any error term, as can be easily verified for $f(n)=\binom{n+k}{k}$.
These two sequences of absolutely monotonic functions represent
the two extreme behaviors with respect to the error term in~(\ref{ineq:3ct-error}). 
Thus, we are naturally led to ask
where functions of the type $Q_u$ fall in between these two extremes.
%
%

  In this paper we prove that, in fact, the error $2^{-\sqrt{n}}$ cannot be improved in Corollary~\ref{cor:3ct}
  for $d=2$ in the following strong sense.
\begin{theorem}
\label{thm:sharp-error}
For all $\eps>0$, $A>0$ and $n_0\in\Nb$ there exists a harmonic function
$u:\Zb^2\to\Cb$ and $n>n_0$ such that
\begin{equation}
\label{ineq:3ct-sharp}
 Q_u(2n) > A\sqrt{Q_u(n)Q_u(4n)} + 2^{-n^{1/2+\eps}}Q_u(4n) \ .
\end{equation}
\end{theorem}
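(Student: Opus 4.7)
The plan is to mimic the extreme binomial example $f(m)=\binom{m}{k}$ from Theorem~\ref{thm:binomial-family} by the sequence $Q_u$ of a carefully chosen discrete harmonic function on $\Z^2$. A direct Stirling expansion gives
\[
 \log\!\bigl(f(2n)/\sqrt{f(n)f(4n)}\bigr)\;=\;\tfrac{k^{2}}{16n}+O\!\bigl(k^{4}/n^{3}\bigr),
\]
so with $k\asymp\sqrt{n\log A}$ the ratio $f(2n)/\sqrt{f(n)f(4n)}$ comfortably exceeds $A$; meanwhile $f(4n)/f(2n)\approx 2^{k}$, which makes $2^{-n^{1/2+\eps}}f(4n)$ negligible compared to $f(2n)$ as soon as $k=O(\sqrt n)\ll n^{1/2+\eps}$. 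The theorem therefore reduces to realising this binomial behaviour as $Q_u$ for some harmonic~$u$.

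The key observation is that if $u$ is discrete harmonic on $\Z^2$ and vanishes on the $\ell^{1}$-ball $B(k-1)=\{x\in\Z^2:\|x\|_{1}\leq k-1\}$, then $Q_u(m)=\Eb|u(S_m)|^{2}$ automatically vanishes for $m=0,1,\dots,k-1$, because the simple random walk started at~$0$ stays inside $B(m)$ at time $m$. Combined with absolute monotonicity (Theorem~\ref{thm:Q-absolutely-monotonic}), this forces the Newton expansion $Q_u(m)=\sum_{j}a_{j}\binom{m}{j}$ to satisfy $a_{0}=\cdots=a_{k-1}=0$ with all $a_{j}\geq 0$, matching the lower-end Newton profile of $\binom{m}{k}$ exactly. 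Such a $u$ is easy to find when $k=2$: $u=xy$ is harmonic on $\Z^2$, vanishes on $B(1)$, and produces $Q_u(m)=\binom{m}{2}/8$; for larger $k$ one exploits that the space of discrete harmonic polynomials on $\Z^2$ of degree at most $K$ is $(2K+1)$-dimensional---spanned by $1$ together with $\Re z^{j},\Im z^{j}$ for $1\leq j\leq K$, each of which turns out to be discretely harmonic---so a dimension count against $|B(k-1)|=\Theta(k^{2})$ provides nonzero $u$ vanishing on $B(k-1)$ of some degree $K=K(k)$.

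The central step, and the main obstacle, is to pass from this qualitative ``Newton support in $[k,K]$'' to the quantitative three-circles violation~\eqref{ineq:3ct-sharp}. The functional $f\mapsto f(2n)/\sqrt{f(n)f(4n)}$ does \emph{not} interact well with positive linear combinations---Cauchy--Schwarz goes the wrong way---so the fact that the inequality holds for each individual $\binom{m}{j}$ with $j\geq k$ does not by itself transfer to $Q_u(m)=\sum_{j\geq k}a_{j}\binom{m}{j}$. What is needed is to exploit the specific distribution of Newton coefficients produced by a harmonic $u$: either arrange, using the freedom in the space of harmonic polynomials vanishing on $B(k-1)$, that the Newton mass concentrates near a single index $j\asymp k$; or track the mixture through a recursion in $k$. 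This is where the recursive Newton-interpolation techniques and the cancellation phenomena announced in the abstract presumably do the real work.

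Granted such a concentration or cancellation estimate, the proof concludes: for prescribed $(\eps,A,n_{0})$ set $k=\lceil c\sqrt{n\log A}\rceil$ for a suitable absolute constant $c$, pick $n>n_{0}$ so that the Stirling asymptotics yield~\eqref{ineq:3ct-sharp} for $\binom{m}{k}$ with a margin strictly exceeding the $o(1)$ correction coming from the mixture, and take $u$ to be the corresponding harmonic function vanishing on $B(k-1)$.
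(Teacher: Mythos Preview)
Your proposal is not a proof: you yourself isolate the central difficulty---that the functional $f\mapsto f(2n)/\sqrt{f(n)f(4n)}$ is not controlled under positive linear combinations---and then assume it away (``Granted such a concentration or cancellation estimate, the proof concludes''). That assumed step is where essentially all of the work lies.

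Your construction also makes this step harder, not easier. A dimension count does produce a nonzero harmonic polynomial vanishing on $B(k-1)$, but since $|B(k-1)|=2k^{2}-2k+1$ while the harmonic polynomials of degree at most $K$ span a $(2K+1)$-dimensional space, you are forced to take $K\gtrsim k^{2}$. The Newton support of $Q_u$ then sits in $[k,2K]\subset[k,\Theta(k^{2})]$, and you have no information whatsoever on how the mass $(a_j)$ is distributed across that long interval. There is no mechanism in sight that would force concentration near a single index~$j\asymp k$.

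The paper proceeds differently. It fixes the explicit discretization $Z_k$ of $z\mapsto z^{k}/k!$, so that $Q_k:=Q_{Z_k}$ is a polynomial of degree exactly~$k$ with Newton expansion $Q_k(n)=\sum_{j=0}^{k}a_{k,j}\binom{n}{k-j}$ and $a_{k,0}=1$. The structural input is the recursion $a_{k,j}=4^{-j}\tilde Q_j(k-j)$ (Theorem~\ref{thm:Qk-recursive}), which gives $a_{k,j}\sim (k/4)^{j}/j!$ as $k\to\infty$. This is \emph{not} concentration at a single index; it is a match with the Vandermonde expansion $\binom{n+k/4}{k}=\sum_j\binom{k/4}{j}\binom{n}{k-j}$. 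Together with a truncation estimate this yields the two-sided comparison $Q_k(n)\asymp f_{k,1/4}(n)$ in a range $n>k^{2}/\psi(k)$ reaching below $k^{2}$ (Theorem~\ref{thm:comparison}), and Theorem~\ref{thm:binomial-family} shows that $f_{k,\alpha}$ violates the three-circles inequality for every fixed $\alpha<1/2$. The shift by $k/4$ is essential: the paper never reduces to the pure model $\binom{n}{k}=f_{k,0}$ you aim for, and the proof would not go through for $\alpha\geq 1/2$.
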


We recall \cite{lip-man-2015}*{Theorem 1.13} that for any given $A>0, \eps>0$, by allowing the number of dimensions, $d$, to be large enough one can find a harmonic function on $u:\Zb^d\to\Rb$ such that $Q_u$ satisfies~(\ref{ineq:3ct-sharp}).
In light of this fact, the emphasis in the present article is on the sharpness
of the error term in a \emph{given} number of  dimensions, and we resolve this question in two dimensions.

More generally, we believe that the theme of understanding the way the  optimal error is affected by the properties of the underlying random walk may be of high interest as a link between analysis
and geometry. Furthermore, this question is linked  to the following  broader question: What is the ensemble of absolutely monotonic functions
 which can be realized as $Q_u$ for some harmonic function~$u$
 on $\Zb^d$? As suggested to us by Eugenia Malinnikova, it may be true also that the sharp error term
 we find is universal in the sense that different sharp discrete
  three circles theorems on $\Zb^d$ have the same error term (see \cite{guadie-malinnikova-3ct}).
   
The route we take towards the proof of Theorem~\ref{thm:sharp-error} is highly indirect due to
combinatorial obstacles, seemingly very difficult to surpass, which we would otherwise face (see Section~\ref{sec:ideas} for a more detailed explanation).
In particular, after suggesting the candidate harmonic function $u$
with corresponding large error, we  analyze $Q_u$ via its Newton expansion, and then compare $Q_u$ with a model absolutely monotonic function by proving relevant remainders estimates. 
Since the proof incorporates several independent ingredients
we first describe the global picture in Section~\ref{sec:ideas}.
%
%
%
%
%
\subsection{Ideas and outline of proof}
\label{sec:ideas}
%
%
Natural candidates for harmonic functions exhibiting  corresponding large error terms
are discrete analogs, $Z_k$, of the continuous harmonic functions
defined by $z\mapsto z^k/k!$ in $\Cb$. We have chosen to work with a discretization
process having its roots in~\cite{lovasz-discrete-analytic-survey} and adapted in~\cite{jls-duke14}.
The advantage of working with this discretization stems from the fact that it
preserves harmonic functions.
For short we denote $Q_{Z_k}$ by $Q_k$.

It has already been noted that $f_{k}(n)=\binom{n}{k}$ gives rise to an error arbitrarily close to $2^{-\sqrt{n}}$ when $k$ is large.
 One of the key observations in the proof of Theorem~\ref{thm:sharp-error}
is that there exists a one-parameter family
of such sequences of essentially absolutely monotonic functions.
This family is explicitly given by  $f_{k,\alpha}(n) := \binom{n+\alpha k}{ k}$ with $0\leq \alpha<1/2$ (see Theorem~\ref{thm:binomial-family}). One may be surprised here as
one expects that the optimal error for $f_{k, \alpha}$ 
drops continuously from~$2^{-\sqrt{n}}$ to~$0$ as~$\alpha$ goes from
$0$ to $1$.

Once we understand the behavior of the error term for the model family,~$f_{k, \alpha}$, we aim to prove 
 that $Q_k(n)$
is comparable with the $f_{k, 1/4}(n)$ in a certain range of $n$'s. This comparison should be uniform in $k$ and, very importantly,
 be valid for $n$'s much smaller than $k^2$,
as this range is the source of the  large error
in the logarithmic convexity inequality
with $f_{k, \alpha}$ (in fact, in the range $n>k^2$ all members of the family $(f_{k,\alpha})_{0\leq \alpha\leq 1}$  are  comparable. In particular, $f_{k, \alpha}$ satisfies $f_{k,\alpha}(2n)\leq 20\sqrt{f_{k, \alpha}(n)f_{k,\alpha}(4n)}$ 
 with no error term for $n>k^2$, similarly to $f_{k, 1}$.  
 See also~\cite{lip-man-2015}*{Theorem 1.12}).

As mentioned, the proof of the above comparison is 
highly indirect. The reason we are not able to approach the problem directly lies in complicated cancellation phenomena which occur in the expressions for  $|Z_k|^2$. To get a feeling for this, try to prove that 
$|(x+iy)^k|^2 = (x^2+y^2)^k$ just by expanding the parenthesis on the left hand side. This involves cancellation for alternating sums of binomial coefficients, the discrete analogues of which are 
 involved to an extent that  we are not able to keep track of anymore.
 %
 %
We now describe in some detail the main ideas and steps in the proof
of the discussed comparison.
\begin{enumerate}
\item We expand $Q_k$ as a Newton series.
$$Q_k(n)=\sum_{j=0}^k a_{k, j}\binom{n}{k-j}\ .$$
\item\label{step:recursion} We find a recursive behavior of $Q_k$:
$a_{k, j} \approx 4^{-j}Q_j(k-j)$. This statement is made precise in
Theorem~\ref{thm:Qk-recursive}.  
\item \label{step:asymptotics}
From the preceding step we can conclude (Theorem~\ref{thm:akj-main-coefficient}) that  $a_{k, j}$ is
a polynomial in $k$ of degree $j$ of which leading term is
$(k/4)^j/j!$. At this stage it looks plausible that
$Q_k$ behaves like $\binom{n+k/4}{k}$ since 
$$\binom{n+k/4}{k}=\sum_{j=0}^k \binom{k/4}{j}\binom{n}{k-j}\ .$$

However, it is far from evident at this point whether this comparison is valid for any $n<k^2$ which is the relevant range.

\item\label{step:akj-main-contribution} It turns out that in order to
have sharp upper and lower bounds on~$Q_k(n)$ in the range $n\gg k$ one needs to precisely understand $a_{k, j}$ only for $j\ll k$.
Theorem~\ref{thm:pseudo-recursion} shows
$$Q_k(n)\approx \sum_{j<k^2/n} a_{k, j}\binom{n}{k-j}\ .$$

To prove this we  need just rough upper and lower bounds on $a_{k, j}$ for all $k, j$.
The lower bound we need is $a_{k, j}\geq 0$ which
we get with no extra work (recall that we know this a priori from Theorem~\ref{thm:Q-absolutely-monotonic} or more directly from 
Step~\ref{step:recursion}).
The rough upper bound we need, Claim~\ref{claim:akj-global-bound}, follows from Step~\ref{step:recursion}.

\item\label{step:abk-remainder} We show (Corollary~\ref{cor:sum-head-estimate}) that
$$\binom{n+k/4}{k}\approx \sum_{j<k^2/n} \binom{k/4}{j}\binom{n}{k-j}$$

\item\label{step:comparison} The comparison $Q_k\approx f_{k, 1/4}$ in a relevant range (Theorem~\ref{thm:comparison}) now follows from  
steps~\ref{step:asymptotics},~\ref{step:akj-main-contribution}
and~\ref{step:abk-remainder}.
\end{enumerate}

\paragraph{A guide to the reader.}
After reading the outline of the proof in Section~\ref{sec:ideas},
we suggest to concentrate first on the proof of the lower bound in Theorem~\ref{thm:comparison}, since all the main ideas 
are contained in that part of the proof, while it does not require 
Sections~\ref{sec:truncation} and~\ref{sec:Qk-bound}.
Although Theorem~\ref{thm:Q-absolutely-monotonic} stays in the background, we made an attempt to keep this paper
independent of~\cite{lip-man-2015}. In particular,
Theorem~\ref{thm:Q-absolutely-monotonic} for the special family~$Q_k$
follows from Theorem~\ref{thm:Qk-recursive}.
The only place where we do need three simple technical lemmas from~\cite{lip-man-2015} is in the proof of the recursive formula, Theorem~\ref{thm:Qk-recursive}.
The basic notations and objects are introduced in Sections~\ref{sec:notations} and~\ref{sec:example}.
For convenience we give the list of dependencies for sections~\ref{sec:comparison}-\ref{sec:newton-remainder}:
$\ref{sec:Qk-bound}\rightarrow\ref{sec:truncation}
\rightarrow\ref{sec:comparison-conclusion}\rightarrow\ref{sec:optimality-proof}$,
 $\ref{sec:recursion}\rightarrow\ref{sec:akj-asymptotics}\rightarrow \ref{sec:comparison-conclusion}$, $\ref{sec:newton-remainder}\rightarrow\ref{sec:truncation}$, $\ref{sec:newton-remainder}\rightarrow\ref{sec:comparison-conclusion}$ and $\ref{sec:model}\rightarrow\ref{sec:optimality-proof}$.

\paragraph{Acknowledgments.}
We would like to thank Eugenia Malinnikova 
for her influence on this work. Eugenia motivated us to look for the optimal error term, after finding together with Maru Guadie  in \cite{guadie-malinnikova-3ct}
a first formulation of a three circles theorem with an error,
asking whether $2^{-\sqrt{n}}$ could be the optimal error.
An important part of this work has been conducted while both authors
were visiting the R\'enyi Institute. We wish to thank
Mikl\'os Ab\'ert and the R\'enyi Groups and Graphs research group 
for their generous support. 
The support of the BSF is gratefully acknowledged (grants nos.\ 2010214 and 2014108).
The second author gratefully acknowledges the support of the ISF (grant no.~753/14).

\section{Basic notations and definitions}
\label{sec:notations}
We fix notations and definitions we  use throughout the paper.

Let $S=\{(0, 1), (0, -1), (1, 0), (-1, 0)\}$ be the standard
symmetric generating set for $\Zb^2$.

\begin{definition} For $u:\Zb^2\to\Cb$ and  $s\in S$ let
$$\partial_s u(x): = u(x+s)- u(x)\ .$$
\end{definition}

We now define the Laplace operator acting on functions on~$\Z^2$.
\begin{definition}
Let $u:\Zb^2\to\Cb$.
$$(\Delta u) (x):= \frac{1}{4}\sum_{s\in S} (\partial_s u)(x)\ .$$
\end{definition}

\begin{definition}
A function $u:\Zb^2\to\Cb$ is called \emph{harmonic} if 
$\Delta u = 0$.
\end{definition}

Let $(S_n)_{n=1}^{\infty}$ be the simple random walk on $\Zb^2$
starting at $0$. We measure the $L^2$-growth of $u$ in terms of
the random walk.
\begin{definition}
\label{def:Q}
$$Q_u(n):=\Eb |u(S_n)|^2.$$
\end{definition}

A central role in our analysis is played also by
the following family of sequences of binomial coefficients.
\begin{definition}
\label{def:fka}
Let $0\leq\alpha\leq 1$.
Set $f_{k, \alpha}(n):=\binom{n+\alpha k}{k}$.
\end{definition}

\section{A harmonic discretization of $z\mapsto z^k/k!$}
\label{sec:example}
In this section we define a sequence of discrete harmonic functions~$Z_k$ on $\Zb^2$, being our main object of study in order to prove Theorem~\ref{thm:sharp-error}.

We discretize $z\mapsto z^k/k!$ following the recipe  appearing in~\cite{jls-duke14}.

\begin{definition}
\label{def:Fj}
$$F_k(x):=\binom{x+\frac{k-1}{2}}{k}
=\frac{1}{k!}\prod_{j=0}^{k-1}\left(x+\frac{k-1}{2}-j\right)\ .$$
\end{definition}
$F_k$ can be thought of as a discretization of $x\to x^k/k!$.
Then, we define
\begin{definition}
$$Z_k(x, y):= \sum_{l=0}^{k} {i^{l}} F_{k-l}(x)F_l(y)\ .$$
\end{definition}
Since $z^k/k!=\sum_{l=0}^{k} i^l \frac{x^{k-l}}{(k-l)!}\frac{y^l}{l!}$ we may think of $Z_k$ as discretizing $z\mapsto z^k/k!$.
The special feature of this discretization is that it preserves harmonicity \cite{jls-duke14}*{Lemma 2.1}, i.e.,~$Z_k$ is a harmonic function.

Since the discretization process does not commute with
a homothety we need the following variant of $Z_k$.
\begin{definition}
Let
\label{def:zktilde}
$$\tilde{Z}_k(x, y) := 2^k Z_k\left(\frac{x}{2}, \frac{y}{2}\right)\ ,$$
where by abuse of notations we assume $Z_k$ is defined also for
half-integers.
\end{definition}

\begin{notation*}
For convenience we let
$$Q_k := Q_{Z_k}, \quad \tilde{Q}_k:=Q_{\tilde{Z}_k}\ .$$
\end{notation*}

\section{Comparison of $Q_k$ with a model absolutely monotonic function}
\label{sec:comparison}
In Section~\ref{sec:recursion} we derive an approximate recursive formula for~$Q_k$, leading us in Section~\ref{sec:akj-asymptotics}  to the asymptotic behavior of the Newton coefficients~$a_{k, j}$. 
In Section~\ref{sec:truncation} it gives us a way to estimate $Q_k(n)$ in the range $n\gg k$ knowing the behavior of $Q_j(n)$ only for $j\ll k$. Finally, in Section~\ref{sec:comparison-conclusion} we are able to compare 
$Q_k$ with a model essentially absolutely monotonic function.

\subsection{The recursive nature of $Q_k$}
\label{sec:recursion}
We recall that Newton's interpolation (see e.g.\ \cite{courant-john-I}*{p.~473}) gives that for any function $f:\Nb\to\Rb$ 
$$f(n)=\sum_{j=0}^{n} f^{(j)}(0)\binom{n}{j}\ ,$$
where $f^{(j)}$ denotes the $j$th forward discrete derivative of $f$.

Accordingly, we define
\begin{definition}
\label{def:akj}
The Newton coefficients $a_{k, j}$ are defined by the identity
$$\forall n\in \Nb\quad Q_k(n) = \sum_{j=0}^{k} a_{k, j}\binom{n}{k-j}$$
\end{definition}
In view of the above, $a_{k, j}=Q_k^{(k-j)}(0)$.
The following theorem shows that $Q_k$ has an approximate recursive nature.
\begin{theorem}
\label{thm:Qk-recursive}
$$a_{k, j} = 4^{-j}\tilde{Q}_j(k-j).$$
In particular, $a_{k, 0}= 1$.
\end{theorem}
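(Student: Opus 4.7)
The plan is to identify the Newton coefficients $a_{k,j}$ with iterated Laplacians of $|Z_k|^2$ and then unfold the half-integer shift structure of the Lov\'asz--JLS discretization to land on $\tilde Q_j$. Since the simple random walk has generator $\Delta$, conditioning on $S_n$ yields $Q_k(n+1) - Q_k(n) = \Eb[\Delta|Z_k|^2(S_n)]$, and iterating gives $Q_k^{(m)}(0) = \Delta^m|Z_k|^2(0)$. Writing $m = k-j$, the theorem then reduces to the polynomial identity
$$\Delta^m|Z_k|^2(0) = 4^{-(k-m)}\tilde Q_{k-m}(m).$$

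The key one-step reduction is the Pascal-type identity $F_k(x+1) - F_k(x) = F_{k-1}(x + 1/2)$, which, inserted into the $\sum_l i^l F_{k-l}(x) F_l(y)$ expansion of $Z_k$, gives for every $s \in S$
$$\partial_s Z_k(x,y) = c_s\, Z_{k-1}\bigl((x,y) + s/2\bigr),\qquad |c_s|=1,$$
with $c_s \in \{\pm 1, \pm i\}$ depending on the direction. Combined with the discrete Leibniz rule $\Delta(Z_k \bar Z_k) = \bar Z_k \Delta Z_k + Z_k \Delta \bar Z_k + \tfrac{1}{4} \sum_s |\partial_s Z_k|^2$ and the harmonicity of $Z_k$, this produces the polynomial identity
$$\Delta|Z_k|^2(x,y) = \frac{1}{4} \sum_{s\in S} \bigl|Z_{k-1}\bigl((x,y) + s/2\bigr)\bigr|^2.$$

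Induction on $m$ now yields
$$\Delta^m|Z_k|^2(x,y) = \frac{1}{4^m} \sum_{s_1,\ldots,s_m \in S} \bigl|Z_{k-m}\bigl((x,y) + (s_1+\cdots+s_m)/2\bigr)\bigr|^2,$$
the inductive step using that $\Delta$ commutes with translation and that the base identity, being a polynomial identity, applies at any shifted argument. Evaluating at $(x,y) = 0$, the right-hand side is exactly $\Eb|Z_{k-m}(S_m/2)|^2$, and Definition~\ref{def:zktilde} gives $|Z_{k-m}(S_m/2)|^2 = 4^{-(k-m)}|\tilde Z_{k-m}(S_m)|^2$, so this equals $4^{-(k-m)}\tilde Q_{k-m}(m)$. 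The case $j=0$ (so $m=k$) reduces to $a_{k,0} = \tilde Q_0(k) = 1$, immediate from $\tilde Z_0 \equiv 1$.

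The main obstacle is the clean single-step reduction $\partial_s Z_k = c_s Z_{k-1}(\cdot + s/2)$: both $|c_s| = 1$ and the precise half-integer shift are rigid consequences of the $(k-1)/2$ offset built into $F_k$, and this is exactly why the rescaled $\tilde Z_j$ rather than $Z_j$ appears in the final formula. Once this identity, the Leibniz rule, and the generator formula $Q_k^{(1)}(n) = \Eb[\Delta|Z_k|^2(S_n)]$ (the three technical facts inherited from~\cite{lip-man-2015}) are in place, the iteration is an essentially mechanical polynomial computation.
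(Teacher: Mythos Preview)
Your proof is correct and follows essentially the same approach as the paper: both identify $a_{k,j}$ with $\Delta^{k-j}|Z_k|^2(0)$ via the random-walk generator, invoke the derivative formula $\partial_s Z_k = c_s Z_{k-1}(\cdot + s/2)$, and combine it with the Leibniz rule and harmonicity to reduce to an average of $|Z_j|^2$ over half-integer random-walk positions. The only cosmetic differences are that the paper quotes the iterated identity $\Delta^l|f|^2(0) = 4^{-l}\sum_{s_1,\ldots,s_l}|\partial_{s_1}\cdots\partial_{s_l} f|^2(0)$ as a black box from~\cite{lip-man-2015} while you re-derive it by a direct induction on $m$, and you are (rightly) explicit about the unimodular constants $c_s$, which the paper's cited formula~(\ref{eqn:Zk-derivatives}) suppresses but which are harmless inside $|\cdot|^2$.
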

\begin{proof}
We recall \cite{lip-man-2015}*{Lemma 2.4} that 
\begin{equation}
\label{eqn:derivative-laplacian}
a_{k, j} = Q_k^{(k-j)}(0)=\Delta^{k-j} |Z_k|^2 (0).
\end{equation}
On the other hand, we know \cite{lip-man-2015}*{Remark 2.3}
that for any harmonic function $f:\Zb^2\to\Cb$
\begin{equation}
\label{eqn:laplace-powers-harmonic-fcn}
\Delta^l (|f|^2) (0)= 4^{-l}\sum_{s_1, \ldots s_l} |\partial_{s_1}\ldots
\partial_{s_l} f|^2(0) \ ,
\end{equation}
where all $s_i$'s are in the symmetric generating set $S$.
In order to substitute in~(\ref{eqn:laplace-powers-harmonic-fcn}) $f=Z_k$ we readily compute \cite{lip-man-2015}*{Proposition 5.2}
\begin{equation}
\label{eqn:Zk-derivatives}
 \partial_s (Z_k)(p)=Z_{k-1}\left(p+\frac{s}{2}\right)\ .
\end{equation}

Combining~(\ref{eqn:derivative-laplacian}),~(\ref{eqn:laplace-powers-harmonic-fcn}) and~(\ref{eqn:Zk-derivatives}) together we get
\begin{align*}
a_{k, j} &= 4^{-(k-j)}\sum_{s_1, \ldots s_{k-j}} |\partial_{s_1}\ldots\partial_{s_{k-j}} Z_k|^2 (0) 
\\ 
&=4^{-(k-j)}\sum_{s_1, \ldots s_{k-j}} |Z_j 
((s_1+\ldots+s_{k-j})/2)|^2 
\\ 
&=4^{-(k-j)} \sum_{s_1, \ldots s_{k-j}} |2^{-j}\tilde{Z}_j 
(s_1+\ldots+s_{k-j})|^2 =
4^{-j}\Eb |\tilde{Z}_j(S_{k-j})|^2\ .
\end{align*}
\end{proof}

\subsection{Asymptotics of $a_{k, j}$ as a function of $k$}
\label{sec:akj-asymptotics}
%
From Theorem~\ref{thm:Qk-recursive} we can obtain
the asymptotics of $a_{k, j}$ as a function of $k$.
\begin{theorem}
\label{thm:akj-main-coefficient}
$$a_{k, j} = \frac{1}{j!}\left(\frac{k}{4}\right)^j + P_{j-1}(k)\ ,$$
where $P_{j-1}$ is a polynomial of degree $j-1$ at most.
\end{theorem}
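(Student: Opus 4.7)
The plan is to use Theorem~\ref{thm:Qk-recursive}, which gives $a_{k,j}=4^{-j}\tilde{Q}_j(k-j)$, and reduce the problem to showing that $\tilde{Q}_j(n)$ is a polynomial in $n$ of degree exactly $j$ with leading coefficient $1/j!$. Given this, substituting $n=k-j$ and expanding $(k-j)^j$ in powers of $k$ yields
$$a_{k,j} = \frac{(k-j)^j}{4^j\, j!} + 4^{-j}R_{j-1}(k-j) = \frac{1}{j!}\left(\frac{k}{4}\right)^j + P_{j-1}(k),$$
as required, where $R_{j-1}$ and $P_{j-1}$ are polynomials of degree at most $j-1$.

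The first step is algebraic: since $F_k(x)$ has leading term $x^k/k!$, the top-degree homogeneous part of $Z_j(x,y)=\sum_{l=0}^j i^l F_{j-l}(x)F_l(y)$ is $(x+iy)^j/j!$, and this is preserved by the rescaling $\tilde{Z}_j(x,y)=2^j Z_j(x/2,y/2)$. Consequently, the top-degree part of $|\tilde{Z}_j(x,y)|^2$ equals $(x^2+y^2)^j/(j!)^2$. Next, for any polynomial $p(x,y)$ of degree $d$, the map $n\mapsto \Eb[p(S_n)]$ is a polynomial in $n$ of degree at most $\lfloor d/2\rfloor$. This follows from the identity $\Eb[e^{i(sX_n + tY_n)}]=\psi(s,t)^n$, where $\psi(s,t)=\Eb[e^{i(sX_1+tY_1)}]$, together with the fact that $\log\psi$ vanishes at least to second order at the origin, so each power of $n$ in the Taylor expansion of $\psi^n$ contributes at least quadratically in $(s,t)$. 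Applying this to $p=|\tilde{Z}_j|^2$ of degree $2j$ shows that $\tilde{Q}_j(n)$ is a polynomial in $n$ of degree at most $j$.

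The main step is to identify the $n^j$-coefficient of $\tilde{Q}_j(n)$, which can only come from the top-degree part of $|\tilde{Z}_j|^2$ and hence equals $(j!)^{-2}\lim_{n\to\infty}n^{-j}\Eb[|S_n|^{2j}]$. By the 2D central limit theorem applied to the simple random walk on $\Zb^2$ (whose per-coordinate step variance is $1/2$), $|S_n|^2/n$ converges in distribution to $|G|^2$, where $G=(G_1,G_2)$ is a 2D Gaussian with covariance $\tfrac{1}{2}I$; since $|G|^2$ has the $\mathrm{Exp}(1)$ distribution, $\Eb[|G|^{2j}]=j!$. Uniform integrability is automatic because $|S_n|\leq n$, so $\Eb[|S_n|^{2j}]/n^j\to j!$, and the leading coefficient of $\tilde{Q}_j(n)$ equals $j!/(j!)^2=1/j!>0$. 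Positivity confirms that $\tilde{Q}_j$ has degree exactly $j$.

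The main technical subtlety is linking the polynomial structure of $\tilde{Q}_j(n)$ to its precise leading coefficient. The CLT route is conceptually clean but probabilistic; a purely algebraic alternative would extract the $n^j$-coefficient combinatorially from the expansion $\Eb[|S_n|^{2j}]=\sum \Eb[(\xi_{i_1}\cdot\xi_{i_1'})\cdots(\xi_{i_j}\cdot\xi_{i_j'})]$, where only pair-matching index patterns contribute at the leading order, at the cost of more bookkeeping.
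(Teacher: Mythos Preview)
Your argument is correct in substance and reaches the same conclusion, but the route differs from the paper's in an interesting way. Both proofs start from Theorem~\ref{thm:Qk-recursive} and from the observation that the top-degree part of $|\tilde Z_j|^2$ is $(x^2+y^2)^j/(j!)^2$. From there the paper stays entirely within discrete calculus: it uses the identity $\tilde Q_j^{(l)}(m)=\Eb\big(\Delta^l|\tilde Z_j|^2\big)(S_m)$ to see that $\tilde Q_j$ is a polynomial of degree $j$, and identifies the leading coefficient as $\frac{1}{j!}\Delta^j|\tilde Z_j|^2(0)$, which it computes by iterating the identity that the top-degree part of $\Delta(x^2+y^2)^m$ is $m^2(x^2+y^2)^{m-1}$. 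Your approach instead uses the characteristic-function expansion $\psi^n=(1+q)^n$ with $q$ vanishing to second order to get the polynomial structure, and then invokes the CLT to pin down the leading coefficient via $\Eb|G|^{2j}=j!$ for $G\sim N(0,\tfrac12 I)$. The paper's computation is more elementary and self-contained; yours is more conceptual and explains \emph{why} the constant $1/j!$ appears (it is the $j$th moment of an exponential, coming from the Gaussian limit), at the cost of importing probabilistic machinery.

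One small correction: your justification of uniform integrability, ``automatic because $|S_n|\le n$'', does not work as stated, since this only gives $(|S_n|^2/n)^j\le n^j$, which is unbounded. What you actually need, and already have, is that $\sup_n n^{-(j+1)}\Eb|S_n|^{2(j+1)}<\infty$; this follows immediately from your characteristic-function argument applied with exponent $j+1$, and yields uniform integrability of $(|S_n|^2/n)^j$ by boundedness in $L^{(j+1)/j}$. With that fix the moment-convergence step is valid and the proof goes through.
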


\begin{proof}
First, let us see that $a_{k, j}$ is indeed a polynomial in $k$
of degree $j$. Momentarily, let us write $a_{k, j} = a_j(k-j)$
to  emphasize the dependence on~$k$. By Theorem~\ref{thm:Qk-recursive}
$$a_j(m) = 4^{-j}\tilde{Q}_j(m) = 4^{-j}\Eb |\tilde{Z}_j|^2(S_m)\ .$$
Consequently \cite{lip-man-2015}*{Lemma 2.4},
$$a_j^{(l)}(m) = 4^{-j}\Eb(\Delta^{l}|\tilde{Z}_j|^2)(S_{m})\ .$$
Since $\Delta^l|\tilde{Z}_j|^2$ is a polynomial of degree $2j-2l$,
we get that $a_j$ is a polynomial of degree $j$  of which leading coefficient is
$\frac{1}{j!}4^{-j}\Delta^j(|\tilde{Z}_j|^2)(0, 0)$.

On the other hand, $|\tilde{Z}_j(x, y)|^2 = (x^2+y^2)^j/(j!)^2 + P_{2j-1}(x, y)$, where $P_{2j-1}$ is a polynomial of degree at most $2j-1$. It is not difficult to check that the highest degree term of $\Delta (x^2+y^2)^j$ is
$j^2(x^2+y^2)^{j-1}$ and inductively conclude the statement of
the theorem.
\end{proof}

\subsection{A truncated Newton expansion for $Q_k$}
\label{sec:truncation}
The crucial theorem~\ref{thm:pseudo-recursion} below shows that in order to have a grasp on $Q_k(n)$
in the range $n\gg k$ it is sufficient to understand the coefficients $a_{k, j}$ for 
a range of $j$'s much smaller than $k$.
To prove it we only need rough bounds on the coefficients $a_{k, j}$
for all $k, j$ which follow from Theorem~\ref{thm:Qk-recursive}.
This section is required only for the upper bound in Theorem~\ref{thm:comparison} and can be skipped on a first reading.
\begin{claim}
\label{claim:akj-global-bound}
There exist $A_1>0, B_1>0$ such that
$$\forall k\geq j\quad 0\leq a_{k, j} \leq B_1\binom{A_1 k}{j} .$$
\end{claim}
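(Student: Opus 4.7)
The non-negativity $a_{k,j}\geq 0$ is immediate from Theorem~\ref{thm:Qk-recursive}, since
$a_{k,j}=4^{-j}\tilde Q_j(k-j)=4^{-j}\Eb|\tilde Z_j(S_{k-j})|^2\geq 0.$

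For the upper bound the plan is to combine a pointwise bound on $\tilde Z_j$ with a moment estimate for the simple random walk. The scaling $\tilde F_k(x)=2^k F_k(x/2)$ together with Definition~\ref{def:Fj} gives $\tilde F_k(x)=\frac{1}{k!}\prod_{i=0}^{k-1}(x+k-1-2i)$, so each factor has modulus at most $|x|+k$ and therefore $|\tilde F_k(x)|\leq (|x|+k)^k/k!$. Plugging this into the defining sum of $\tilde Z_j$, bounding $(|x|+j-l)^{j-l}\leq(|x|+j)^{j-l}$ and symmetrically in $y$, and applying the binomial theorem yields the clean pointwise estimate
$$|\tilde Z_j(x,y)|\leq\sum_{l=0}^j\frac{(|x|+j)^{j-l}(|y|+j)^l}{(j-l)!\,l!}=\frac{(|x|+|y|+2j)^j}{j!}.$$

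The next ingredient is a moment bound of the form $\Eb(|X_n|+|Y_n|+2j)^{2j}\leq C^j j^j (n+j)^j$ for an absolute constant $C$. Conditioning on the subset of steps that move horizontally, $X_n$ is distributed as a $\pm 1$ Rademacher sum of some length $N\leq n$, so the classical pairing bound gives $\Eb X_n^{2j}\leq (2j-1)!!\,n^j\leq(2jn)^j$, and likewise for $Y_n$; the convexity inequality $(a+b+c)^{2j}\leq 3^{2j-1}(a^{2j}+b^{2j}+c^{2j})$ then delivers the desired moment estimate. Substituting $n=k-j$ and using the crude inequality $j^j/(j!)^2\leq e^j/j!$, one obtains $a_{k,j}\leq (C'k)^j/j!$ for some absolute $C'$. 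Since $\binom{A_1 k}{j}\geq(A_1 k/2)^j/j!$ whenever $A_1\geq 2$ and $j\leq k$, choosing $A_1\geq 2C'$ (and taking $B_1=1$, say) yields the claim.

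Since only a rough bound is needed none of the constants have to be optimized. The main obstacle is really just the random-walk moment estimate, which is classical; everything else is direct manipulation of the explicit formula for $\tilde Z_j$.
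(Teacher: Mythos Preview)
Your argument is correct and follows the same overall skeleton as the paper: use Theorem~\ref{thm:Qk-recursive} to rewrite $a_{k,j}=4^{-j}\tilde Q_j(k-j)$, bound $\tilde Q_j$ by combining a pointwise estimate on $\tilde Z_j$ with a random-walk moment bound, arrive at $a_{k,j}\leq (C'k)^j/j!$, and convert this to a binomial coefficient.

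Where you genuinely diverge is in the pointwise bound on $\tilde Z_j$. The paper (Section~\ref{sec:Qk-bound}) first proves $F_j(x)^2\leq x^{2j}/(j!)^2+1$ via a case split at $|x|=(j-1)/2$, and then pushes this through a somewhat involved chain (Cauchy--Schwarz, Young's inequality, the identity $\sum\binom{k}{l}^2=\binom{2k}{k}$) to reach Proposition~\ref{prop:Zk-bound}. Your route is more direct: the crude factor-by-factor bound $|\tilde F_m(x)|\leq(|x|+m)^m/m!$ requires no case analysis, and feeding it into the defining sum collapses immediately via the binomial theorem to $|\tilde Z_j(x,y)|\leq(|x|+|y|+2j)^j/j!$. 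This is cleaner and entirely adequate here, since only a rough bound is needed. The paper's sharper estimate $F_j(x)^2\leq x^{2j}/(j!)^2+1$ would matter if one cared about the constant $A_1$, but neither proof does. Your moment estimate (conditioning on the horizontal steps and invoking the Rademacher bound $(2j-1)!!\,n^j$) is also a bit more elementary than the paper's generating-function computation in Lemma~\ref{lem:std-rw-estimates}, though they yield essentially the same thing. Finally, your conversion $(C'k)^j/j!\leq\binom{A_1k}{j}$ via the lower bound $\binom{A_1k}{j}\geq(A_1k/2)^j/j!$ for $A_1\geq 2$, $j\leq k$ is a touch more direct than the paper's appeal to Stirling.
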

\begin{proof}
We consider the expression for $a_{k, j}$ given in Theorem~\ref{thm:Qk-recursive}. The left hand side inequality is immediate (as is seen a priori from Theorem~\ref{thm:Q-absolutely-monotonic}). For the right hand side inequality
we see that we need to bound $\tilde{Q}_j$ from above. 
To that end we examine definition~\ref{def:Q}
and successively bound the objects involved in it.
This is done in Section~\ref{sec:Qk-bound}.
We find the estimate in Proposition~\ref{prop:Qk-rough-bound}.
 If $k-j\geq j$ we conclude that
 \mbox{$a_{k, j} < 8(20 k)^j/j!$}, 
 and if $k-j<j$, we use $(20e)^{j}\leq (20ek)^j/j!$ for $k\geq j$
 to deduce that \mbox{$a_{k, j} < 8(20ek)^j/j!$}.
 Finally, recall that by a Stirling type estimate
 we can find two positive constants $c_0, c_1$ such that
 $$c_0\left(\frac{n}{e}\right)^n \sqrt{n} \leq
  n!\leq c_1\left(\frac{n}{e}\right)^n\sqrt{n}\ .$$
  The preceding estimates  immediately give 
  $$a_{k, j}\leq 8\frac{(20ek)^j}{j!}\leq 8\frac{c_1}{c_0}\binom {20 e^2 k}{j}\ .$$
\end{proof}

\begin{theorem}
\label{thm:pseudo-recursion}
There exist $A>5, B>1$ such that if $n>2k$ then
$$\sum_{j\leq 5k^2/n}a_{k, j}\binom{n}{k-j}\leq Q_k(n)\leq B\sum_{j\leq Ak^2/n} a_{k, j}\binom{n}{k-j}\ .$$
\end{theorem}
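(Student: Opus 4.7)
The plan is to handle the two inequalities separately. The left inequality is essentially free: by Claim~\ref{claim:akj-global-bound} (or equivalently by Theorem~\ref{thm:Q-absolutely-monotonic}) every Newton coefficient $a_{k,j}$ is non-negative, so any partial sum of the Newton expansion is dominated by the full sum $Q_k(n)=\sum_{j=0}^k a_{k,j}\binom{n}{k-j}$. No finer information about $a_{k,j}$ is needed for this direction, and the constant $5$ plays no role (any positive cutoff works).

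For the right inequality, I would write $Q_k(n)=H+T$ with head $H=\sum_{j\leq Ak^2/n} a_{k,j}\binom{n}{k-j}$ and tail $T=\sum_{j>Ak^2/n} a_{k,j}\binom{n}{k-j}$, and show $T\leq C\binom{n}{k}$ for a constant $C$ depending only on $A_1,B_1$. Since Theorem~\ref{thm:Qk-recursive} gives $a_{k,0}=1$, the head automatically satisfies $H\geq\binom{n}{k}$, so this yields $Q_k(n)\leq (1+C)H$ with $B=1+C$. To produce a bound on $T$, I would replace $a_{k,j}$ by $B_1\binom{A_1 k}{j}$ using Claim~\ref{claim:akj-global-bound} and study the auxiliary sequence $T_j:=\binom{A_1 k}{j}\binom{n}{k-j}$. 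Its consecutive ratio
$$\frac{T_{j+1}}{T_j}=\frac{(A_1 k-j)(k-j)}{(j+1)(n-k+j+1)}$$
is manifestly non-increasing in $j$ (all four factors move monotonically), and the hypothesis $n>2k$ forces $n-k+j+1\geq n/2$, so this ratio is at most $2A_1 k^2/(jn)$. Choosing $A$ large enough that $2A_1/A\leq 1/2$ (with $A>5$ also required) guarantees the ratio stays below $1/2$ for every $j\geq Ak^2/n$, and the tail is therefore geometrically controlled by at most twice its leading term $T_{j_0}$, where $j_0$ is the smallest integer above $Ak^2/n$.

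It remains to bound $T_{j_0}$ by $\binom{n}{k}$. For this I would use the two elementary estimates $\binom{A_1 k}{j}\leq (A_1 k)^j/j!$ and
$$\frac{\binom{n}{k-j}}{\binom{n}{k}}=\prod_{i=0}^{j-1}\frac{k-i}{n-k+j-i}\leq \left(\frac{2k}{n}\right)^j,$$
valid for all $j\leq k$ because each denominator exceeds $n/2$. Combining them with Stirling's inequality gives $T_j/\binom{n}{k}\leq (2eA_1 k^2/(jn))^j$, which at $j=j_0$ is at most $(2eA_1/A)^{j_0}\leq 1$ provided $A\geq 2eA_1$. Taking $A\geq\max(4A_1,2eA_1,6)$ therefore delivers $T\leq 2B_1\binom{n}{k}\leq 2B_1 H$, completing the upper bound with $B=1+2B_1$.

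The only place that asks for care is the monotonicity of $T_{j+1}/T_j$ past the threshold and its joint interaction with the threshold chosen for geometric decay; however, since the ratio is visibly a decreasing function of $j$, the standard geometric-series argument applies uniformly and the calculation does not need to be split across regimes. The essential point is that Claim~\ref{claim:akj-global-bound} supplies a crude but $k$-uniform upper bound, and this uniformity is precisely what allows the tail to be estimated without any detailed knowledge of $a_{k,j}$ in the combinatorially delicate range $j\gg k^2/n$.
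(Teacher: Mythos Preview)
Your proof is correct and follows the same strategy as the paper: the lower bound is immediate from $a_{k,j}\geq 0$, and for the upper bound you bound the tail using Claim~\ref{claim:akj-global-bound} and then show the terms $\binom{A_1 k}{j}\binom{n}{k-j}$ decay geometrically past the threshold, so that the tail is controlled by $\binom{n}{k}=a_{k,0}\binom{n}{k}\leq H$. The paper packages this tail estimate as a separate lemma (Lemma~\ref{lem:sum-tail-estimate}), whose proof bounds each term by $\binom{n}{k}(2eA_1 k^2/(jn))^j$ directly and sums; your ratio argument together with your bound on $T_{j_0}$ reproduces exactly this computation, so the two proofs differ only in presentation, not in substance.
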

\begin{proof}
The left hand side inequality follows from the lower
bound $a_{k, j}\geq 0$ and Definition~\ref{def:akj}.
To prove the right hand side inequality,
let $A_1>0, B_1>0$ be as in Claim~\ref{claim:akj-global-bound}.
Then, we can estimate the tail in Newton's expansion of $Q_k(n)$
as follows.
\begin{equation*}
\label{eqn:tail}
\sum_{j>6e A_1 k^2/n}\!\!\!\!\!\! a_{k, j}\binom{n}{k-j} \leq 
B_1\!\!\!\!\!\sum_{j>6e A_1 k^2/n}\! \binom{A_1k}{j}\binom{n}{k-j}\stackrel{(*)}{\leq} \frac{B_1}{2}\binom{n}{k}=\frac{B_1}{2}a_{k, 0}\binom{n}{k}\ ,
\end{equation*}
where inequality~($\ast$) follows from the remainder estimate in Lemma~\ref{lem:sum-tail-estimate}.
 Combining this together with Definition~\ref{def:akj}
we conclude the desired estimate with $A=6eA_1$ and $B=1+B_1/2$. 
\end{proof}

\subsection{Comparison of $Q_k$ with $f_{k, 1/4}$}
\label{sec:comparison-conclusion}
We compare $Q_k(n)$ with $f_{k, 1/4}(n)$ (see Definition~\ref{def:fka}). It is  crucial
that this comparison be valid for a range of $n$'s much smaller than~$k^2$. 
\begin{theorem}
\label{thm:comparison}
There exists a function $\psi:\Nb\to[0,\infty)$, monotonically increasing to $\infty$, and $C>0$ such that
 for all $k\in\Nb$ and $n>k^2/\psi(k)$
$$\frac{1}{4}\binom{n+\frac{k}{4}}{k}\leq Q_k(n)\leq C \binom{n+\frac{k}{4}}{k}\ .$$
\end{theorem}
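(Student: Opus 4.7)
The plan is to assemble three ingredients that have been prepared in the preceding subsections: the approximate recursion that isolates the relevant head of the Newton series (Theorem~\ref{thm:pseudo-recursion}), the sharp asymptotics of the Newton coefficients $a_{k,j}$ (Theorem~\ref{thm:akj-main-coefficient}), and the head estimate for the binomial $f_{k,1/4}(n)=\binom{n+k/4}{k}$ (Corollary~\ref{cor:sum-head-estimate}). I will choose $\psi:\Nb\to[0,\infty)$ monotonically increasing to infinity slowly enough so that, whenever $n>k^2/\psi(k)$, the truncation index $Ak^2/n$ is $o(k)$; in that window, the comparison $a_{k,j}\asymp\binom{k/4}{j}$ becomes tractable term by term.

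The starting point is Vandermonde's identity
$$\binom{n+\tfrac{k}{4}}{k}=\sum_{j=0}^{k}\binom{k/4}{j}\binom{n}{k-j},$$
in which all summands with $j<k^2/n<k/4$ are positive once $n>4k$, and Corollary~\ref{cor:sum-head-estimate} guarantees that this positive head is comparable to $\binom{n+k/4}{k}$ itself. In parallel, Theorem~\ref{thm:pseudo-recursion} gives the sandwich
$$\sum_{j\le 5k^2/n}a_{k,j}\binom{n}{k-j}\le Q_k(n)\le B\sum_{j\le Ak^2/n}a_{k,j}\binom{n}{k-j},$$
so it suffices to compare the two truncated sums term by term. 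For $j\le Ak/\psi(k)=o(k)$, the product formula $\binom{k/4}{j}=\frac{(k/4)^j}{j!}\prod_{i=0}^{j-1}(1-4i/k)$ gives $\binom{k/4}{j}\asymp (k/4)^j/j!$, and by Theorem~\ref{thm:akj-main-coefficient} one also has $a_{k,j}\asymp (k/4)^j/j!$ once the correction $P_{j-1}(k)$ is shown to be negligible relative to the leading term. Combining these yields $a_{k,j}\asymp\binom{k/4}{j}$ uniformly on the truncation window, and summing against $\binom{n}{k-j}$ yields both the lower and the upper bound of Theorem~\ref{thm:comparison}.

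The main obstacle is calibrating $\psi$ so that the ``leading term dominates'' step is uniform in $j$: Theorem~\ref{thm:akj-main-coefficient} only asserts that $P_{j-1}(k)$ has degree at most $j-1$ in $k$, with no quantitative control of how its coefficients depend on $j$. To overcome this I would revisit the random-walk formula $a_{k,j}=4^{-j}\tilde{Q}_j(k-j)$ of Theorem~\ref{thm:Qk-recursive} in combination with the rough upper bound $a_{k,j}\le B_1\binom{A_1 k}{j}$ from Claim~\ref{claim:akj-global-bound}; dividing this bound by the conjectural leading term $(k/4)^j/j!$ and comparing with the product expansion of $\binom{A_1 k}{j}$ shows that any $\psi$ tending to infinity sufficiently slowly forces the lower-order polynomial to be dwarfed by $(k/4)^j/j!$ throughout the relevant window. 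A matching lower bound on $a_{k,j}$ then follows from Theorem~\ref{thm:akj-main-coefficient} together with the a priori nonnegativity $a_{k,j}\ge 0$. With such a $\psi$ fixed, the lower bound of Theorem~\ref{thm:comparison} is obtained by retaining only the head $j\le 5k^2/n$ (which is easier and does not need Section~\ref{sec:truncation}), while the upper bound uses the full sandwich.
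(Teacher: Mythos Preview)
Your overall architecture is exactly the paper's: feed the sandwich of Theorem~\ref{thm:pseudo-recursion} with the coefficient comparison $a_{k,j}\asymp\binom{k/4}{j}$ coming from Theorem~\ref{thm:akj-main-coefficient}, and close with Vandermonde plus Corollary~\ref{cor:sum-head-estimate}. Where you diverge is precisely the paragraph you flag as the ``main obstacle,'' and there your proposed fix does not work.

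The rough bound $a_{k,j}\le B_1\binom{A_1 k}{j}$ from Claim~\ref{claim:akj-global-bound} is off by a factor of order $(4A_1)^j$ from the target $(k/4)^j/j!$, so dividing by the leading term tells you nothing about when $P_{j-1}(k)$ becomes negligible; that claim is used in the paper only to control the \emph{tail} inside Theorem~\ref{thm:pseudo-recursion}, never to get the sharp coefficient comparison. Likewise, the nonnegativity $a_{k,j}\ge 0$ gives no lower bound of the form $c\binom{k/4}{j}$.

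The paper resolves the calibration much more softly. Since $a_{k,j}$ and $\binom{k/4}{j}$ are both polynomials in $k$ of degree $j$ with the same leading coefficient $4^{-j}/j!$ (Theorem~\ref{thm:akj-main-coefficient}), their ratio tends to $1$ as $k\to\infty$ for each fixed $j$. Hence there exists, for every $j$, a threshold $\phi(j)$ such that $k>\phi(j)$ forces $\tfrac12\binom{k/4}{j}<a_{k,j}<2\binom{k/4}{j}$; take $\phi$ monotone increasing and set $\psi=\lceil\phi^{-1}/A\rceil$. Then $n>k^2/\psi(k)$ makes every $j\le Ak^2/n$ satisfy $\phi(j)<k$, and the term-by-term comparison you want is automatic. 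No quantitative information about the coefficients of $P_{j-1}$ is ever needed, and the lower bound really does use only the left inequality of the sandwich, inequality~(\ref{ineq:akj-control-low-j}), and Corollary~\ref{cor:sum-head-estimate}.
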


\begin{proof}
According to  Theorem~\ref{thm:akj-main-coefficient} there exists a monotonically increasing function
\mbox{$\phi:\Nb\to\Nb$} such that if $k>\phi(j)$ then 
\begin{equation}
\label{ineq:akj-control-low-j}
\frac{1}{2}\binom{k/4}{j}<a_{k, j}<2\binom{k/4}{j}\ .
\end{equation}
To prove the upper bound on $Q_k$
let $A$ and $B$ be as in Theorem~\ref{thm:pseudo-recursion}.
Let $\psi=\lceil\phi^{-1}/A\rceil$.
If $n>k^2/\psi(k)$ and $j\leq Ak^2/n$ then $\phi(j)<k$.
Also, since $A>5$ and $\phi(k)\geq k$, $\psi(k)<k/2$.
So,  the assumption $n>k^2/\psi(k)$ implies $n>2k$.
Hence, 
by Theorem~\ref{thm:pseudo-recursion} and~(\ref{ineq:akj-control-low-j}) we have
$$Q_k(n)\leq 2B \sum_{j\leq Ak^2/n} \binom{k/4}{j}\binom{n}{k-j}
\leq 2B\binom{n+k/4}{k}\ ,$$
where the last inequality follows from Lemma~\ref{lem:a+b-choose-k-expansion}.

We now prove the lower bound on $Q_k$.
If  $n>k^2/\psi(k)$ and $j\leq 5k^2/n$ then,
again, $\phi(j)<k$ and $n>2k$. Hence, by the very same Theorem~\ref{thm:pseudo-recursion} and~(\ref{ineq:akj-control-low-j})
$$Q_k(n)\geq \frac{1}{2}\sum_{j\leq 5k^2/n} \binom{k/4}{j}\binom{n}{k-j}
\geq \frac{1}{4}\binom{n+\frac{k}{4}}{k}\ ,$$
  where the last inequality is due to Corollary~\ref{cor:sum-head-estimate}.
\end{proof}

\section{A model absolutely monotonic  family}
\label{sec:model}
We study the error term in a model family of essentially absolutely monotonic functions. We recall Definition~\ref{def:fka} and check
\begin{lemma}
$f_{k, \alpha}$ is absolutely monotonic in the range $[k, \infty)$.
\end{lemma}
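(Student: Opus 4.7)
The plan is to compute the forward discrete derivatives of $f_{k,\alpha}$ explicitly, using a standard identity for differences of binomial coefficients, and then to verify positivity by inspecting the resulting product.

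First, I would recall the identity
\[
\binom{m+1}{r} - \binom{m}{r} = \binom{m}{r-1},
\]
valid for integer $m$ and integer $r$ (with the convention $\binom{x}{r}=0$ for $r<0$). Applied to the shifted binomial $n\mapsto\binom{n+\alpha k}{k}$, this identity shows that a single forward difference in $n$ lowers the lower index by one. An easy induction on $j$ then gives
\[
f_{k,\alpha}^{(j)}(n) \;=\; \binom{n+\alpha k}{\,k-j\,}\qquad\text{for all } j\geq 0,
\]
where the right-hand side is interpreted as the polynomial
\[
\binom{n+\alpha k}{k-j}\;=\;\frac{1}{(k-j)!}\prod_{i=0}^{k-j-1}(n+\alpha k - i)
\]
for $0\leq j\leq k$, and as $0$ for $j>k$.

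Second, I would verify non-negativity of this expression for $n\geq k$. Among the $k-j$ factors in the product above, the smallest one occurs at $i=k-j-1$, and equals
\[
n+\alpha k-(k-j-1)\;=\;(n-k)+j+1+\alpha k.
\]
For $n\geq k$ and $\alpha\geq 0$ this is at least $j+1+\alpha k\geq 1>0$, so every factor in the product is strictly positive. Hence $f_{k,\alpha}^{(j)}(n)\geq 0$ for every $j\geq 0$ and every integer $n\geq k$, which is precisely the assertion that $f_{k,\alpha}$ is absolutely monotonic on $[k,\infty)$.

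There is no real obstacle here: the argument is a direct computation once one recognizes that shifting the upper index in $\binom{\cdot}{k}$ is exactly the operation that the forward difference carries out. The only mild subtlety is the bookkeeping of the range of $n$: the threshold $n\geq k$ is dictated precisely by the smallest factor in the expansion, so the lemma is sharp in the sense that the constraint $n\geq k$ cannot be dropped in general for $\alpha$ near $0$.
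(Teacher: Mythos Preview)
Your proof is correct and follows the same approach as the paper's one-line argument, which writes $f_{k,\alpha}^{(j)}(n) = f_{k-j,\alpha}(n+j\alpha) = \binom{n+\alpha k}{k-j}$ and observes positivity for $n\geq k$. One small precision point: you state the Pascal identity for integer $m$ but apply it with $m=n+\alpha k$ possibly non-integral; since the identity is polynomial in $m$ it holds for all real $m$, so there is no actual gap.
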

\begin{proof}
Observe that $f_{k, \alpha}^{(j)}(n) = f_{k-j, \alpha}(n+j\alpha)\geq 0$ for $n\geq k$.
\end{proof}

We show that for $\alpha<1/2$ we have a saturation of the error term in the corresponding  logarithmic convexity inequality.
\begin{theorem}
\label{thm:binomial-family}
Let $\eps>0$, $C>0$ and $0\leq \alpha<1/2$.
Then, for all $k\in \Nb$ large enough and $\delta>0$ small enough
the inequality
$$ f_{k, \alpha}(2n)> C\sqrt{f_{k,\alpha}(n)f_{k, \alpha}(4n)}+
 2^{-n^{1/2+\eps}}f_{k, \alpha}(4n) $$
 holds for all $n\in[k^{2-\eps}, \delta k^2]$.
\end{theorem}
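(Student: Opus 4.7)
The plan is to obtain a uniform asymptotic expansion of $L(n) := \log f_{k,\alpha}(n)$ in the window $n \in [k^{2-\eps}, \delta k^2]$ via Stirling's formula, then read off both the log-concavity defect $D := 2L(2n) - L(n) - L(4n)$ and the growth rate $L(4n) - L(n)$. The target is to show that $A := f_{k,\alpha}(2n)/\sqrt{f_{k,\alpha}(n)f_{k,\alpha}(4n)} = e^{D/2}$ is uniformly bounded below by a constant exceeding $2C$ (once $\delta$ is small), while $B := \sqrt{f_{k,\alpha}(4n)/f_{k,\alpha}(n)}$ is at most $2^k$ times a mild correction, so that $2^{-n^{1/2+\eps}} B$ tends to $0$ uniformly once $n^{1/2+\eps}$ dominates $k$ in the window.

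Writing $N := n+\alpha k$, $M := n-(1-\alpha)k$, and $c := (N+M)/2 = n + (\alpha-\tfrac{1}{2})k$, I would evaluate $N\log N - M\log M$ via the identity $N\log N - M\log M = k + \int_M^N \log x\,dx$ and expand $\log x$ about $c$ in even powers of $(x-c)/c$. Combined with Stirling's formula applied to $N!$, $M!$, $k!$, this yields
$$L(n) = k\log(n/k) + k + \bigl(\alpha - \tfrac{1}{2}\bigr)\frac{k^2}{n} + R_0(k) + R_1(n,k),$$
where $R_0(k)$ is $n$-independent (essentially $-\tfrac{1}{2}\log(2\pi k)$) and $R_1(n,k) = O(k/n + k^3/n^2 + 1/k)$ uniformly. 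In forming $D$, the $k\log(n/k)$ and constant-$k$ terms cancel by $2\log(2n) = \log n + \log(4n)$, the $n$-independent $R_0(k)$ cancels by linearity, and a direct computation gives
$$D = \frac{(1-2\alpha)k^2}{8n} + O\!\left(\frac{k}{n} + \frac{k^3}{n^2} + \frac{1}{k}\right).$$
For $\eps < 1/2$, each error term is $o(1)$ as $k\to\infty$ uniformly over the window (since $k/n \le k^{\eps-1}$ and $k^3/n^2 \le k^{2\eps-1}$), while the leading term is at least $(1-2\alpha)/(8\delta)$. Hence for $k$ large enough, $\log A = D/2 \ge (1-2\alpha)/(32\delta)$, and choosing $\delta$ small in terms of $\alpha$ and $C$ forces $A \ge 2C+1$.

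The same expansion gives $L(4n) - L(n) = 2k\log 2 + 3(1-2\alpha)k^2/(8n) + o(1)$ uniformly, whence $\log B \le k\log 2 + O(k^\eps)$ (using $k^2/n \le k^\eps$). Since $(2-\eps)(\tfrac{1}{2}+\eps) > 1$ for small $\eps$, we have $n^{1/2+\eps} \ge k^{1+\eps}$ throughout the window, so $n^{1/2+\eps}\log 2$ swamps $\log B$ for large $k$ and therefore $2^{-n^{1/2+\eps}} B \le 1$ uniformly. Combining with the definitions of $A$ and $B$,
$$f_{k,\alpha}(2n) - C\sqrt{f_{k,\alpha}(n)f_{k,\alpha}(4n)} - 2^{-n^{1/2+\eps}}f_{k,\alpha}(4n) = \sqrt{f_{k,\alpha}(n)f_{k,\alpha}(4n)}\bigl(A - C - 2^{-n^{1/2+\eps}}B\bigr) \ge C\sqrt{f_{k,\alpha}(n)f_{k,\alpha}(4n)} > 0.$$

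The main obstacle is the bookkeeping: one must verify that the $O(\log k)$ Stirling $\tfrac{1}{2}\log(2\pi\cdot)$ corrections are $n$-independent up to lower-order $O(k/n)$ pieces, so that they cancel in the log-concavity combination rather than swamping the leading $(1-2\alpha)k^2/(8n)$ term --- which at $n = \delta k^2$ is only of constant order $1/\delta$. It is this cancellation that makes the argument uniform across the entire window $[k^{2-\eps}, \delta k^2]$ and allows $\delta$ to be kept a positive constant depending only on $C$ and $\alpha$.
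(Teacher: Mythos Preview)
Your approach is correct (at least for $\eps<1/2$, which is the only case needed downstream) but proceeds quite differently from the paper. The paper never invokes Stirling: it bounds the two ratios $f_{k,\alpha}(2n)/f_{k,\alpha}(4n)$ and $f_{k,\alpha}(2n)^2/(f_{k,\alpha}(n)f_{k,\alpha}(4n))$ separately by writing each as an explicit product of $k$ factors and manipulating those factors directly. For the first ratio one pulls out $2^{-k}$ and crudely bounds the remaining product below by $(1-k/(3n))^k$. For the second ratio the paper writes the product as $\prod_j A_j$ with $A_j = 1 + n(j-\alpha k)/((n+\alpha k - j)(4n+\alpha k - j))$, pairs the factors $A_j A_{2\lceil\alpha k\rceil - j}$ using an elementary lemma of the form $F(y)F(z)\ge 1$ whenever $y+z\ge 0$, and then bounds the surviving tail $\prod_{j>2\alpha k} A_j$ below by $e^{(1-2\alpha)/(40\delta)}$.

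Your Stirling-based expansion is more analytic and arguably more transparent: it isolates the single asymptotic $L(n)\approx k\log(n/k)+k+(\alpha-\tfrac12)k^2/n$ from which both the log-concavity defect $D\sim(1-2\alpha)k^2/(8n)$ and the growth $L(4n)-L(n)\sim 2k\log 2$ can be read off at once, and it makes the role of the threshold $\alpha=1/2$ completely explicit. The paper's product argument, by contrast, is fully elementary (no asymptotics or error tracking), works verbatim for all $\eps>0$, and avoids the bookkeeping you flag at the end concerning the cancellation of the $\tfrac12\log(2\pi\,\cdot)$ Stirling corrections. Either route yields the theorem; yours gives sharper constants and a cleaner picture of \emph{why} the defect scales like $k^2/n$, while the paper's gives a shorter self-contained argument.
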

%
%
%
%
%

\begin{proof}
We have for $k$ large enough and $n>k^{2-\eps}$
\begin{align*}
\frac{f_{k,\alpha}(2n)}{f_{k,\alpha}(4n)}
&=\prod_{j=0}^{k-1} \frac{2n+\alpha k-j}
{4n+\alpha k -j}
=\frac{1}{2^k}\prod_{j=0}^{k-1}\left(1-\frac{j-\alpha k}{4n-j+\alpha k}\right)
\\
&\geq \frac{1}{2^k}\prod_{\alpha k<j\leq k-1}\left(1-\frac{j-\alpha k}{4n-j+\alpha k}\right)
\geq \frac{1}{2^k}\left(1-\frac{k-\alpha k}{4n-k+\alpha k}\right)^k
\\
&\geq \frac{1}{2^k}\left(1-\frac{k}{3n}\right)^k
\geq
\frac{1}{2^k}e^{-k^2/(2n)}
\geq 2^{-2k} 
\geq 2^{-n^{1/2+\eps}}\ .
\end{align*}

The preceding inequality holds for $0\leq \alpha\leq 1$.
On the other hand, for $\alpha<1/2$ we also have
\begin{align*}
\frac{f_{k,\alpha}(2n)^2}{f_{k,\alpha}(n)f_{k,\alpha}(4n)} 
&=
\prod_{j=0}^{k-1} \frac{(2n+\alpha k -j)^2}{(n+\alpha k - j)(4n+\alpha k - j)}
\\
&=\prod_{j=0}^{k-1} \left(1+\frac{n(j-\alpha k)}{(n+\alpha k - j)(4n+\alpha k - j)}\right)
=:\prod_{j=0}^{k-1} A_{j}
\\
&=A_{\lceil \alpha k\rceil}
\prod_{j=0}^{\lceil\alpha k\rceil-1}
A_{j}\cdot A_{2\lceil \alpha k \rceil-j}
\prod_{j=2\lceil \alpha k \rceil+1}^{k-1} A_{j}
\geq \prod_{j=2\lceil\alpha k\rceil+1}^{k-1} A_{j}\ ,
\end{align*}
where the last inequality is true due to the following elementary lemma of which proof we omit.
\begin{lemma}
Let $n\in\Nb$ and for $|x|<n$ set
$F(x)=1+\frac{nx}{(n-x)(4n-x)}$. If $y+z\geq 0$, $|y|, |z|<n$
then $F(y)F(z)\geq 1$.
\end{lemma}
From here we proceed to obtain
for $n<\delta k^2$
\begin{align*}
\prod_{j=2\lceil \alpha k\rceil+1}^{k-1} A_{j}
&\geq\prod_{j=\frac{k}{2}+\lceil \alpha k \rceil}^{k-1} A_j
\geq 
\left(1+\frac{nk/2}{4n^2}\right)^{k/2-\lceil \alpha k\rceil}
\geq 
\left(1+\frac{1}{8\delta k}\right)^{k(1/2-\alpha)/2} 
\\
&\geq
e^{(1-2\alpha)/(40\delta)} > C\ .
\end{align*}
\end{proof}

\section{Optimality of the error: Proof of Theorem~\ref{thm:sharp-error}}
\label{sec:optimality-proof}
We can now combine Theorem~\ref{thm:comparison} and the study of the error term for the model function $f_{k, 1/4}$ in Section~\ref{sec:model} to conclude that the error term
in Corollary~\ref{cor:3ct} is optimal in two dimensions.
\begin{proof}[Proof of Theorem~\ref{thm:sharp-error}]
Fix $\eps_1, C_1>0$ and let $\alpha=1/4$. Let $k_0=k(\eps_1, C_1, \alpha)$ and $\delta=\delta(\eps_1, C_1, \alpha)$ be as in Theorem~\ref{thm:binomial-family}. Let $\psi$ be as in Theorem~\ref{thm:comparison}. Let $k_1\in \Nb$ be large enough 
so that $k_1^2/\psi(k_1)<\delta k_1^2$ and $k_1^{2-\eps_1}<\delta k_1^2$. Then, for $n, 2n, 4n\in[k_1^2/\psi(k_1), \delta k_1^2]\cap [k_1^{2-\eps_1}, \delta k_1^2]$
\begin{align*}  
Q_k(2n)&\stackrel{\mbox{\scriptsize Th.}\ref{thm:comparison}}{\geq} \frac{1}{4}f_{k, 1/4}(2n) \stackrel{\mbox{\scriptsize Th.\ref{thm:binomial-family}}}
{\geq} 
\frac{C_1}{4}\sqrt{f_{k, 1/4}(n)f_{k,1/4}(4n)} + \frac{1}{4}2^{-n^{1/2+\eps_1}}f_{k, 1/4}(4n)\\
&\stackrel{\mbox{\scriptsize Th.}\ref{thm:comparison}}{\geq} C_1\tilde{C}_1\sqrt{Q_k(n)Q_k(4n)}+\tilde{C}_2 2^{-n^{1/2+\eps_1}}Q_k(4n)\ .
\end{align*}
Hence, if we let $C_1=C/\tilde{C}_1$ and $\eps_1=\eps/2$, we can take $k_1$ sufficiently large such that
$\tilde{C}_2 2^{-n^{1/2+\eps_1}} > 2^{-n^{1/2+\eps}}$ for $n>k_1^2/\psi(k_1)$ and then the stated inequality holds.
\end{proof}

%

\section{A rough upper bound on $\tilde{Q}_k$}
\label{sec:Qk-bound}
The goal of this section to prove the rough upper bound on~$\tilde{Q}_k$
in Proposition~\ref{prop:Qk-rough-bound}. This section stands
independent of all other sections and is required for the proof of the upper bound in Claim~\ref{claim:akj-global-bound}.

\subsection{A bound on $F_k$}
\label{sec:Fk-bound}
\begin{lemma}
\label{lem:Fk-bound}
$$F_j(x)^2\leq \frac{x^{2j}}{j!^2}+1\ .$$
\end{lemma}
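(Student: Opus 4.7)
The plan is to exploit the factorization
$$j!\,F_j(x) = \prod_{l=0}^{j-1}\left(x + \tfrac{j-1}{2} - l\right) = \prod_{l=0}^{j-1}(x - m_l),$$
where $m_l := l - (j-1)/2$. The key observation is that the root multiset $\{m_l\}_{l=0}^{j-1}$ is symmetric about $0$: the involution $l \leftrightarrow j-1-l$ sends $m_l$ to $-m_l$. Hence $\prod_l(x+m_l) = \prod_l(x - m_l)$, and
$$(j!\,F_j(x))^2 \;=\; \prod_{l=0}^{j-1}(x - m_l)\prod_{l=0}^{j-1}(x + m_l) \;=\; \prod_{l=0}^{j-1}(x^2 - m_l^2).$$
I would then split into two regimes based on the size of $|x|$ relative to the largest $|m_l|$, which equals $(j-1)/2$.

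\emph{Regime 1: $|x| \geq (j-1)/2$.} Here $m_l^2 \leq x^2$ for every $l$, so each factor $x^2 - m_l^2$ lies in $[0, x^2]$. Multiplying the $j$ factors, $(j!\,F_j(x))^2 \leq x^{2j}$, so $F_j(x)^2 \leq x^{2j}/j!^2$ --- the lemma holds even without the $+1$.

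\emph{Regime 2: $|x| < (j-1)/2$.} Here I would show the stronger pointwise bound $|F_j(x)| \leq 1$. Set $t := x + (j-1)/2 \in (0, j-1)$, so that $F_j(x) = \binom{t}{j} = \frac{1}{j!}\prod_{l=0}^{j-1}(t - l)$. For $t \in [k, k+1]$ with integer $k \in \{0, 1, \ldots, j-2\}$, an elementary factor-by-factor estimate gives $|t - l| \leq (k+1)-l$ for $l \leq k$ and $|t-l| \leq l-k$ for $l \geq k+1$. Multiplying,
$$\prod_{l=0}^{j-1}|t-l| \;\leq\; (k+1)!\,(j-1-k)! \;=\; \frac{j!}{\binom{j}{k+1}} \;\leq\; j!,$$
so $|F_j(x)| \leq 1/\binom{j}{k+1} \leq 1$, whence $F_j(x)^2 \leq 1 \leq x^{2j}/j!^2 + 1$.

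The main conceptual obstacle, I would expect, is realizing that the inequality is best proved in two disjoint regimes rather than by a single uniform calculation. A direct expansion of $(j!\,F_j(x))^2 - x^{2j}$ in its Newton coefficients runs into sign cancellations, and crude factor-by-factor bounds like $(y - m_l^2)^2 \leq y^2 + m_l^4$ (with $y=x^2$) produce uncontrolled lower-order terms. Splitting at $|x| = (j-1)/2$ cleanly separates the large regime, where the product collapses because every factor is $\leq x^2$, from the small regime, where $F_j$ is uniformly small because $t = x + (j-1)/2$ lies inside the window of roots of the polynomial $\binom{t}{j}$.
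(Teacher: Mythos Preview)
Your proof is correct and follows essentially the same two-regime split as the paper's proof: both handle $|x|\geq (j-1)/2$ by bounding the product by $x^{2j}$, and $|x|<(j-1)/2$ by showing $|F_j(x)|\leq 1/\binom{j}{k+1}\leq 1$ via a factor-by-factor estimate on $\binom{t}{j}$. Your use of the root symmetry to write $(j!\,F_j(x))^2=\prod_l(x^2-m_l^2)$ makes Regime~1 especially transparent, but otherwise the argument mirrors the paper's.
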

\begin{proof}
By symmetry we may assume $x\geq 0$.
Observe that for $x\geq (j-1)/2$
each factor in Definition~\ref{def:Fj} is nonnegative and obviously
bounded by $x$. Hence $F_j(x)\leq x^j/j!$ in this case.

Else, suppose $0\leq x< (j-1)/2$.
Let $m=\lfloor \frac{j+1}{2}-x \rfloor$, and denote
by $\{x\}=x-\lfloor x\rfloor$ the fractional part of $x$.
Observe that
\begin{equation*}
|F_j(x)|=\frac{1}{j!}\prod_{l=0}^{m-1}(\{x\}+l)(l+1-\{x\})
\prod_{l=m}^{j-m-1} (\{x\}+l)\ .
\end{equation*}
Hence,
$$
F_j(x)^2\leq \frac{1}{(j!)^2}\prod_{l=1}^{m-1}(l+1)^4
\prod_{l=m}^{j-m-1} (l+1)^2 =\frac{1}{\binom{j}{m}^2}\leq 1\ .
$$
\end{proof}
\subsection{Bounding $\tilde{Z_k}$}
\label{sec:Zk-bound}
\begin{proposition}
\label{prop:Zk-bound}
$$
|\tilde{Z}_k(x, y)|^2 \leq 3\cdot 20^k\sum_{l=0}^{k}\frac{x^{2l}+y^{2l}}{4^ll!^2}\ .
$$
\end{proposition}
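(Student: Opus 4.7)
The plan is to bound $|\tilde{Z}_k(x,y)|^2$ via a triangle inequality followed by Cauchy--Schwarz, and then apply Lemma~\ref{lem:Fk-bound} termwise. Since each $F_j$ is either even or odd on the half-integers (inspect Definition~\ref{def:Fj}), both sides of the sought inequality are unchanged under $x\mapsto |x|$ and $y\mapsto |y|$, so I may assume $x,y\geq 0$. Starting from $\tilde{Z}_k(x,y) = 2^k\sum_{l=0}^k i^l F_{k-l}(x/2) F_l(y/2)$, the triangle inequality plus Cauchy--Schwarz on $k+1$ summands yield
$$|\tilde{Z}_k(x,y)|^2 \leq 4^k (k+1) \sum_{l=0}^k F_{k-l}(x/2)^2 F_l(y/2)^2.$$

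Next, Lemma~\ref{lem:Fk-bound} gives $F_j(x/2)^2 \leq x^{2j}/(4^j j!^2) + 1$. Expanding the product $(\cdot + 1)(\cdot + 1)$ produces four pieces: a cross-term $x^{2(k-l)}y^{2l}/(4^k(k-l)!^2 l!^2)$; two one-variable terms $x^{2(k-l)}/(4^{k-l}(k-l)!^2)$ and $y^{2l}/(4^l l!^2)$, whose sums over $l$ combine, after reindexing, into $\sum_l (x^{2l}+y^{2l})/(4^l l!^2)$; and the constant $1$, contributing $k+1$ in total.

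The cross-term is the only piece requiring non-trivial handling: combining $x^{2(k-l)}y^{2l}\leq x^{2k}+y^{2k}$ with $\sum_l \binom{k}{l}^2 = \binom{2k}{k}\leq 4^k$ gives $\sum_l x^{2(k-l)}y^{2l}/(4^k(k-l)!^2 l!^2)\leq (x^{2k}+y^{2k})/k!^2$, which is at most $4^k$ times the $l=k$ summand of $\sum_l(x^{2l}+y^{2l})/(4^l l!^2)$. The constant $k+1$ is absorbed using that the $l=0$ term of the target sum equals $2$. Collecting everything yields
$$|\tilde{Z}_k(x,y)|^2 \leq 4^k (k+1)\left[4^k + 1 + \tfrac{k+1}{2}\right]\sum_{l=0}^k \frac{x^{2l}+y^{2l}}{4^l l!^2}.$$

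No step is a substantive obstacle: the proposition is intentionally crude and the generous factor $20^k = 4^k\cdot 5^k$ easily absorbs the slack from Cauchy--Schwarz, from $\binom{2k}{k}\leq 4^k$, and from the additive $+1$ in Lemma~\ref{lem:Fk-bound}. The only bookkeeping is to verify $(k+1)[4^k + 1 + (k+1)/2]\leq 3\cdot 5^k$ for every $k\geq 0$; this is immediate for $k\geq 4$ from the exponential dominance of $(5/4)^k$, and a direct inspection dispatches $k=0,1,2,3$.
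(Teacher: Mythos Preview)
Your proof is correct and follows essentially the same route as the paper: Cauchy--Schwarz on the $k+1$ terms, Lemma~\ref{lem:Fk-bound} termwise, then control of the cross-term via $\sum_l\binom{k}{l}^2=\binom{2k}{k}\leq 4^k$, with the additive $+1$'s absorbed into the $l=0$ summand. The only cosmetic differences are that you work directly with $\tilde{Z}_k$ (applying the lemma at $x/2$) rather than bounding $Z_k$ first and then rescaling, and you use the cruder $x^{2(k-l)}y^{2l}\leq x^{2k}+y^{2k}$ in place of the paper's weighted Young inequality; neither change affects the argument.
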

\begin{proof}
We apply Lemma~\ref{lem:Fk-bound} to bound $Z_k(x, y)$.
\begingroup
\allowdisplaybreaks
\begin{align*}
|Z_k(x, y)|^2&\leq (k+1)\sum_{l=0}^k F_{k-l}(x)^2F_{l}(y)^2
\\
&\leq (k+1)\sum_{l=0}^{k} \left(\frac{x^{2k-2l}}{(k-l)!^2}+1\right)\left(\frac{y^{2l}}{l!^2}+1\right)
\\
&=(k+1)\sum_{l=0}^k \frac{x^{2k-2l}y^{2l}}{(k-l)!^2l!^2}
+(k+1)\sum_{l=0}^{k} \frac{x^{2l}+y^{2l}}{l!^2} + (k+1)^2
\\
\mbox{\footnotesize (by Young's ineq.)}&\leq \frac{k+1}{k}\sum_{l=0}^k \frac{(k-l)x^{2k}+ly^{2k}}{(k-l)!^2l!^2}
+ 2(k+1)^2\sum_{l=0}^{k} \frac{x^{2l}+y^{2l}}{l!^2} 
\\
&= \frac{k+1}{k}\sum_{l=0}^k \frac{l(x^{2k}+y^{2k})}{l!^2(k-l)!^2}
+2(k+1)^2\sum_{l=0}^{k} \frac{x^{2l}+y^{2l}}{l!^2}
\\
&\leq (k+1)\frac{x^{2k}+y^{2k}}{(k)!^2}\sum_{l=0}^{k} \binom{k}{l}^2 
+2(k+1)^2\sum_{l=0}^{k} \frac{x^{2l}+y^{2l}}{l!^2}
\\
&=(k+1)\binom{2k}{k}\frac{x^{2k}+y^{2k}}{(k)!^2}
+2(k+1)^2\sum_{l=0}^{k}\frac{x^{2l}+y^{2l}}{l!^2}
\\
&\leq 3\cdot 8^k \sum_{l=0}^{k}\frac{x^{2l}+y^{2l}}{l!^2} \ .
\end{align*}
\endgroup
It remains to recall Definition~\ref{def:zktilde}  to get 
the desired estimate.

\end{proof}
\subsection{Bounding $\tilde{Q}_k$}
We start with a standard random walk estimate.
\begin{lemma}
\label{lem:std-rw-estimates}
\begin{align*}
\Eb x^{2k}(S_n)\leq \frac{(2k)!}{4^k k!}n^k\ .
\end{align*}
\end{lemma}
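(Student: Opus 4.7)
My plan is to compute the moment generating function of $x(S_n)$ in closed form and then read off the moment bound by comparing its Taylor coefficients with those of a Gaussian generating function. I would write $x(S_n) = \sum_{i=1}^n \xi_i$, where each $\xi_i$ takes values $\pm 1$ with probability $1/4$ each and $0$ with probability $1/2$, independently across steps. A direct computation using the identity $\cosh \lambda = 2\cosh^2(\lambda/2) - 1$ gives
$$\Eb e^{\lambda \xi_i} = \tfrac{1}{4}(e^{\lambda} + e^{-\lambda}) + \tfrac{1}{2} = \tfrac{1}{2}(1 + \cosh \lambda) = \cosh^2(\lambda/2),$$
and by independence of the steps, $\Eb e^{\lambda x(S_n)} = \cosh^{2n}(\lambda/2)$.

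Next, I would establish the termwise Taylor coefficient inequality $\cosh z \leq e^{z^2/2}$: the coefficient of $z^{2j}$ is $1/(2j)!$ on the left and $1/(2^j j!)$ on the right, and these satisfy the elementary inequality $(2j)! \geq 2^j j!$ (equivalently $\binom{2j}{j} \geq 2^j$). Since both power series have nonnegative coefficients, the termwise bound is preserved under raising to the $2n$-th power. Substituting $z = \lambda/2$ then shows that the coefficient of $\lambda^{2k}$ in $\cosh^{2n}(\lambda/2)$ is at most the coefficient of $\lambda^{2k}$ in $e^{2n(\lambda/2)^2/2} = e^{n\lambda^2/4}$, namely $n^k/(4^k k!)$.

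Finally, since $x(S_n)$ is symmetric its odd moments vanish, so the power series expansion of $\Eb e^{\lambda x(S_n)}$ has coefficient $\Eb x(S_n)^{2k}/(2k)!$ at $\lambda^{2k}$. Combining these two expressions yields $\Eb x(S_n)^{2k} \leq \frac{(2k)!}{4^k k!} n^k$, which is the desired inequality. The main (in fact, only) subtlety is the formal justification of the termwise coefficient comparison when raising a power series with nonnegative coefficients to a high power; once this is verified, the rest amounts to a clean reading off of Taylor coefficients.
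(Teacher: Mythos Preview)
Your proof is correct and reaches exactly the same bound, but by a genuinely different route from the paper. Both arguments begin by identifying the moment generating function $\Eb e^{\lambda x(S_n)}=\cosh^{2n}(\lambda/2)$. From there the paper expands $\varphi_n^{(2k)}(0)$ as the multinomial sum $4^{-k}\sum_{j_1+\cdots+j_{2n}=k}\binom{2k}{2j_1,\ldots,2j_{2n}}$, interprets it as the number of ways to place $2k$ distinct objects into $2n$ drawers with even occupancies, and overcounts by first pairing the objects ($(2k)!/(2^k k!)$ pairings) and then distributing $k$ pairs freely ($(2n)^k$ ways). You instead invoke the coefficient-wise sub-Gaussian inequality $\cosh z \preceq e^{z^2/2}$, observe that coefficient domination is preserved under taking powers of series with nonnegative coefficients, and read off the $\lambda^{2k}$ coefficient of $e^{n\lambda^2/4}$. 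Your approach is the standard probabilistic one (this is essentially the sub-Gaussian moment bound for a sum of bounded increments) and is arguably cleaner; the paper's combinatorial overcounting is more hands-on but equally short. One small expositional slip: the parenthetical ``equivalently $\binom{2j}{j}\ge 2^j$'' is not the same statement as $(2j)!\ge 2^j j!$ (the former is $(2j)!\ge 2^j (j!)^2$, which is strictly stronger); the inequality you actually need is just $(j+1)(j+2)\cdots(2j)\ge 2^j$, which is immediate since each factor is at least $2$.
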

\begin{proof}
$X_n:=x(S_n)$ is a one dimensional lazy random walk of which generating function is  $\vphi_n(t):=\Eb e^{tX_n} = \cosh^{2n}(t/2)$.
We can express its $2k$th moment as
$\Eb X_n^{2k} = \vphi_n^{(2k)}(0) \ .$
A direct calculation yields
$$\vphi_{n}^{(2k)}(0)=4^{-k}\sum_{j_1+\ldots+j_{2n}=k} \binom{2k}{2j_1, 2j_2, \ldots, 2j_{2n}}$$
The preceding sum expression  is the number of ways partitioning $2k$ distinct
objects into $2n$ distinct drawers, where each drawer
contains an even number of objects.
Since each such partition can be obtained by first pairing
the $2k$ objects and then partitioning $k$ pairs into $2n$ distinct drawers we get that the last sum
is at most 
$\frac{(2k)!}{2^k k!}(2n)^k$.
\end{proof}

Taking expectations in Proposition~\ref{prop:Zk-bound} and applying
Lemma~\ref{lem:std-rw-estimates}, we obtain
\begin{proposition}
\label{prop:Qk-rough-bound}
$$\tilde{Q}_k(n)< 8\cdot\left\{\begin{array}{lcl}
(20 n)^k/k! &,& \mbox{if } n\geq k\\
(20e)^k &,&\mbox{if } n<k
\end{array}\right.$$
\end{proposition}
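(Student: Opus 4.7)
The plan is to combine the two preceding ingredients (Proposition~\ref{prop:Zk-bound} and Lemma~\ref{lem:std-rw-estimates}) in the natural way and then separate the two regimes for $n$. First, I would take pointwise expectation in Proposition~\ref{prop:Zk-bound} along the random walk $S_n$ to get
$$\tilde{Q}_k(n) = \Eb|\tilde{Z}_k(S_n)|^2 \leq 3 \cdot 20^k \sum_{l=0}^{k}\frac{\Eb x^{2l}(S_n)+\Eb y^{2l}(S_n)}{4^l l!^2}.$$
Then I would apply Lemma~\ref{lem:std-rw-estimates} to each moment (by symmetry the same bound holds for the $y$-coordinate) and simplify the resulting factor $(2l)!/(l!)^2 = \binom{2l}{l}$ using the crude estimate $\binom{2l}{l}\leq 4^l$. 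This collapses the bound to
$$\tilde{Q}_k(n) \leq 6\cdot 20^k \sum_{l=0}^k \frac{n^l}{4^l l!}.$$

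From here I would split into the two cases stated in the proposition. For $n<k$, the partial sum is trivially dominated by the full exponential series: $\sum_{l=0}^k n^l/(4^l l!) \leq e^{n/4} \leq e^{k/4}$. Then $\tilde{Q}_k(n) \leq 6\cdot 20^k e^{k/4} < 8(20e)^k$, since $6 e^{k/4} < 8 e^k$ holds for all $k \geq 0$.

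For $n \geq k$, the partial sum is controlled by its last term. Writing $T_l = n^l/(4^l l!)$, a direct computation gives
$$\frac{T_l}{T_k} = \left(\frac{4}{n}\right)^{k-l}\frac{k!}{l!} \leq \left(\frac{4k}{n}\right)^{k-l} \leq 4^{k-l},$$
using $k!/l! \leq k^{k-l}$ and the hypothesis $n\geq k$. Summing the resulting geometric series gives $\sum_{l=0}^k T_l \leq \frac{4^{k+1}-1}{3} T_k < \frac{4^{k+1}}{3}\cdot \frac{n^k}{4^k k!}$. Substituting back,
$$\tilde{Q}_k(n) \leq 6\cdot 20^k \cdot \frac{4^{k+1}}{3}\cdot\frac{n^k}{4^k k!} = 8\cdot \frac{(20n)^k}{k!},$$
which is exactly the claimed bound.

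The only step that is not purely mechanical is the ratio analysis in the case $n\geq k$; the constants $6$ and $8$ on either side of the target inequality are tight enough that the geometric bookkeeping $6\cdot 4^{k+1}/3 = 8\cdot 4^k$ has to match up exactly, which is the main thing to verify. Everything else (the elementary bound $\binom{2l}{l}\leq 4^l$, the symmetry in $x,y$, and dominating a partial exponential sum by the full series) is routine.
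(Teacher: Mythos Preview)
Your proof is correct and follows essentially the same approach as the paper: both derive the key inequality $\tilde{Q}_k(n)\leq 6\cdot 20^k\sum_{l=0}^k n^l/(4^l l!)$ from Proposition~\ref{prop:Zk-bound} and Lemma~\ref{lem:std-rw-estimates}, and both handle the case $n\geq k$ by the observation that $n^l/l!\leq n^k/k!$ for $l\leq k\leq n$, leaving the geometric sum $\sum 4^{-l}=4/3$ (your ratio bound $T_l/T_k\leq 4^{k-l}$ is just a rephrasing of this). The only minor difference is in the case $n<k$: you bound the partial sum by the full exponential series $e^{n/4}\leq e^{k/4}$, whereas the paper instead bounds each term $n^l/l!$ by $k^k/k!$ and then invokes $k!\geq (k/e)^k$; both routes land comfortably inside $8(20e)^k$.
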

\begin{proof}
We see from Proposition~\ref{prop:Zk-bound} and Lemma~\ref{lem:std-rw-estimates} that
\begin{equation}
\label{ineq:qk}
\tilde{Q}_k(n)\leq 6\cdot 20^k\sum_{l=0}^k \frac{(2l)!}{4^{2l}l!^3} n^l
\leq 6\cdot 20^k\sum_{l=0}^k \frac{n^l}{4^l l!}
\end{equation}
Observe that if $n\geq k$ then $\frac{n^l}{l!}\leq \frac{n^k}{k!}$.
Hence, in this case
$$\tilde{Q}_k(n)\leq 6\frac{(20n)^k}{k!} \sum_{l=0}^\infty 4^{-l}
=8\frac{(20n)^k}{k!}\ .$$
If $n<k$ then $n^l/l!< k^l/l!\leq k^k/k!$. So, we obtain from~(\ref{ineq:qk}) and the estimate $k!\geq (k/e)^k$ that
$$\tilde{Q}_k(n)\leq 6 \frac{(20k)^k}{k!}\leq 6(20 e)^k\ .$$
\end{proof}

\section{A Newton remainder estimate for $\binom{a+b}{k}$}
\label{sec:newton-remainder}
In this section we analyze the remainder in Newton's expansion
for $\binom{a+b}{k}$.

It is not difficult to show
\begin{lemma}[Vandermonde's identity]
\label{lem:a+b-choose-k-expansion}
$$\binom{a+b}{k} = \sum_{j=0}^k \binom{a}{k-j}\binom{b}{j}\ .$$
\end{lemma}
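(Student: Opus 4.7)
The plan is to prove Vandermonde's identity first for non-negative integer parameters by a direct counting argument, then extend to arbitrary real $a, b$ by a polynomial-identity argument (which is needed in the paper since later on we apply the lemma with $a = k/4$, which is generally not an integer).

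For non-negative integers $a$ and $b$, I would interpret the left-hand side as the number of $k$-element subsets of a set $X$ of size $a+b$. Fix a partition $X = A \sqcup B$ with $|A| = a$ and $|B| = b$. Each $k$-element subset of $X$ meets $A$ in exactly $k-j$ elements and $B$ in exactly $j$ elements for some $0 \le j \le k$, and the number of such subsets is $\binom{a}{k-j}\binom{b}{j}$. Summing over $j$ yields the identity. (If one prefers an analytic proof, the same conclusion follows from expanding $(1+x)^{a+b} = (1+x)^a(1+x)^b$ and comparing coefficients of $x^k$.)

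To extend to real parameters, I would fix a non-negative integer $b$ and view both sides of the identity as functions of $a \in \Rb$. Using the convention
\[
\binom{a}{m} = \frac{a(a-1)\cdots(a-m+1)}{m!},
\]
each of the expressions $\binom{a+b}{k}$ and $\sum_{j=0}^k \binom{a}{k-j}\binom{b}{j}$ is a polynomial in $a$ of degree at most $k$. Since we have just shown they agree at every non-negative integer $a$, they agree as polynomials, hence for every $a \in \Rb$. Repeating the argument in the variable $b$ (now for arbitrary real $a$) removes the integrality assumption on $b$ as well.

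I do not anticipate a serious obstacle here; the only point that requires a moment of care is the polynomial extension, ensuring that the finite sum on the right-hand side is genuinely a polynomial of bounded degree in each variable so that the standard ``infinitely many zeros of a polynomial'' argument applies. For the application in the paper it actually suffices to take $a = k/4$ and $b = n$ an integer, so only the first extension step is strictly needed.
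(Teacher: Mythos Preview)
Your argument is correct and follows essentially the same route as the paper: establish the identity for nonnegative integers via the combinatorial interpretation of binomial coefficients, then extend to arbitrary real $a,b$ by the polynomial-identity argument. The paper's proof is terser but identical in spirit; your two-step extension (first in $a$, then in $b$) is exactly the standard way to make the phrase ``polynomial nature'' precise.
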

\begin{proof}
For nonnegative integers $a, b$
the identity follows from combinatorial interpretation of the binomial coefficients or by induction on $a$. Then, due to the
polynomial nature of the expressions involved, it is true for all 
real numbers $a, b$.
\end{proof}

The next Lemma estimates the remainder of the above expansion.
\begin{lemma}
 \label{lem:sum-tail-estimate}
 Let $a\geq 2k$. Then,
 $$\sum_{j>6ekb/a}  \binom{a}{k-j} \binom{b}{j} \leq \frac{1}{2}\binom{a}{k}\ .$$
 \end{lemma}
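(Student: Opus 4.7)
The plan is to estimate the terms $T_j := \binom{a}{k-j}\binom{b}{j}$ by a ratio argument and then reduce to summing a geometric series. A direct computation gives
$$\frac{T_{j+1}}{T_j} = \frac{k-j}{a-k+j+1}\cdot\frac{b-j}{j+1}.$$
Using the hypothesis $a \geq 2k$ (so $a-k+j+1 \geq a/2$) together with the trivial bounds $k-j \leq k$ and $b-j \leq b$, this ratio is at most $\frac{2kb}{a(j+1)}$. Telescoping from $T_0 = \binom{a}{k}$ then yields
$$T_j \;\leq\; \binom{a}{k} \cdot \frac{\lambda^j}{j!}, \qquad \text{where } \lambda := \frac{2kb}{a}.$$

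The second step is to exploit the fast factorial decay of $\lambda^j/j!$ once $j$ exceeds $e\lambda$ by a fixed factor. Using the elementary bound $j! \geq (j/e)^j$, whenever $j > 3e\lambda$ one obtains $\lambda^j/j! \leq (e\lambda/j)^j \leq 3^{-j}$. Since the constraint $j > 6ekb/a$ is precisely $j > 3e\lambda$, combining the two steps gives
$$\sum_{j > 6ekb/a} T_j \;\leq\; \binom{a}{k} \sum_{j \geq 1} 3^{-j} \;=\; \tfrac{1}{2}\binom{a}{k},$$
which is exactly the claimed bound.

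The calculation is essentially routine; the main accounting is matching the constant $6e$ to the product $2 \cdot 3e$ that emerges from the argument: the factor $2$ comes from the ratio bound (via $a-k \geq a/2$) and the factor $3e$ is what the Stirling-type bound $j! \geq (j/e)^j$ requires in order to beat the term down to a geometric sequence with ratio $1/3$. A minor point to verify is that $j > 3e\lambda$ forces $j \geq 1$ in every case, including the degenerate regime $\lambda < 1/(3e)$, so that the geometric series is indeed bounded by $\sum_{j \geq 1} 3^{-j} = 1/2$.
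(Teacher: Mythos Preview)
Your proof is correct and is essentially the same as the paper's. The paper writes the closed-form identity
\[
\binom{a}{k-j}\binom{b}{j}=\binom{a}{k}\,\frac{\binom{k}{j}\binom{b}{j}}{\binom{a-k+j}{j}}
\]
and bounds each factor directly, while you obtain the same expression by telescoping the ratios $T_{j+1}/T_j$; both routes land on $T_j\le\binom{a}{k}(2ekb/(aj))^j$ and finish by summing the geometric series $\sum_{j\ge 1}3^{-j}=1/2$.
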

 \begin{proof}
 \begin{align*}
 \binom{a}{k-j}\binom{b}{j}
 &=\binom{a}{k}\frac{\binom{k}{j}\binom{b}{j}}{\binom{a-k+j}{j}} \leq \binom{a}{k} \frac{1}{j!}\frac{k^j b^j}{(a-k)^j}\\
 &\leq  \binom{a}{k} \frac{k^j b^je^j}{j^j(a-k)^j}
 \leq \binom{a}{k}  \left(\frac{2ekb}{aj}\right)^j\ .
 \end{align*}
 Hence,
 $$\sum_{j>6ekb/a} \binom{a}{k-j}\binom{b}{j} \leq \binom{a}{k} \sum_{j=1}^{\infty} 3^{-j} =\frac{1}{2}\binom{a}{k}\ .$$   
 \end{proof}

\begin{corollary}
\label{cor:sum-head-estimate}
Let $a\geq 2 k$.
Then
$$\sum_{j\leq 6ekb/a} \binom{a}{k-j}\binom{b}{j} \geq \frac{1}{2} \binom{a+b}{k}\ .$$
\end{corollary}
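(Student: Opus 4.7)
The plan is to derive this as a direct consequence of Vandermonde's identity (Lemma~\ref{lem:a+b-choose-k-expansion}) together with the tail estimate of Lemma~\ref{lem:sum-tail-estimate}. The overall strategy is: write $\binom{a+b}{k}$ as the full Vandermonde sum, subtract the tail (which the preceding lemma already controls), and observe that the resulting lower bound for the head is exactly half of $\binom{a+b}{k}$.

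More concretely, I would first write
\[
\binom{a+b}{k} \;=\; \sum_{j\leq 6ekb/a}\binom{a}{k-j}\binom{b}{j}
\;+\; \sum_{j> 6ekb/a}\binom{a}{k-j}\binom{b}{j},
\]
using Lemma~\ref{lem:a+b-choose-k-expansion}. By Lemma~\ref{lem:sum-tail-estimate}, the second (tail) sum is bounded above by $\tfrac12\binom{a}{k}$. The simple monotonicity observation $\binom{a}{k}\leq \binom{a+b}{k}$ (valid since $b\geq 0$ and $a\geq 2k\geq k$, so each factor $(a+b-i)/(k-i)$ in the expansion of $\binom{a+b}{k}$ dominates the corresponding factor of $\binom{a}{k}$) then upgrades the tail bound to $\tfrac12\binom{a+b}{k}$. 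Subtracting gives
\[
\sum_{j\leq 6ekb/a}\binom{a}{k-j}\binom{b}{j} \;\geq\; \binom{a+b}{k} - \tfrac12\binom{a+b}{k} \;=\; \tfrac12\binom{a+b}{k},
\]
which is the desired inequality.

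There is no real obstacle here; the only thing to double-check is the monotonicity $\binom{a}{k}\leq \binom{a+b}{k}$. For nonnegative integer $b$ it is immediate by induction on $b$ using Pascal's rule, and for general real $b\geq 0$ it follows from the polynomial identity together with the factor-by-factor comparison in $\prod_{i=0}^{k-1}(a+b-i)/(k-i)$ versus $\prod_{i=0}^{k-1}(a-i)/(k-i)$ under the hypothesis $a\geq k$. Once this is noted, the corollary is an immediate rewriting of the complementary inequality proved in Lemma~\ref{lem:sum-tail-estimate}, so the argument occupies only a few lines.
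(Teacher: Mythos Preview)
Your proposal is correct and follows exactly the same route as the paper: split $\binom{a+b}{k}$ via Vandermonde, bound the tail by $\tfrac12\binom{a}{k}$ using Lemma~\ref{lem:sum-tail-estimate}, and then invoke $\binom{a}{k}\le\binom{a+b}{k}$ to conclude. The only difference is that you spell out the monotonicity step explicitly, whereas the paper absorbs it into a single displayed chain of inequalities.
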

\begin{proof}
From Lemmas~\ref{lem:a+b-choose-k-expansion} and~\ref{lem:sum-tail-estimate} we know that
$$\sum_{j\leq 6ekb/a}  \binom{a}{k-j}\binom{b}{j}
> \binom{a+b}{k} - \frac{1}{2}\binom{a}{k}
\geq \frac{1}{2}\binom{a+b}{k}\ .$$
\end{proof}

\begin{bibdiv}
\begin{biblist}
\bib{courant-john-I}{book}{
   author={Courant, Richard},
   author={John, Fritz},
   title={Introduction to calculus and analysis. Vol. I},
   series={Classics in Mathematics},
   note={Reprint of the 1989 edition},
   publisher={Springer-Verlag, Berlin},
   date={1999},
   pages={xxiv+661},
}

\bib{jls-duke14}{article}{
   author={Jerison, David},
   author={Levine, Lionel},
   author={Sheffield, Scott},
   title={Internal DLA and the Gaussian free field},
   journal={Duke Math. J.},
   volume={163},
   date={2014},
   number={2},
   pages={267--308},
   issn={0012-7094},
}

\bib{guadie-malinnikova-3ct}{article}{
   author={Guadie, Maru},
   author={Malinnikova, Eugenia},
   title={On three balls theorem for discrete harmonic functions},
   journal={Comput. Methods Funct. Theory},
   volume={14},
   date={2014},
   number={4},
   pages={721--734},
}

\bib{lip-man-2015}{article}{
   author={Lippner, Gabor},
   author={Mangoubi, Dan},
   title={Harmonic functions on the lattice: Absolute monotonicity and
   propagation of smallness},
   journal={Duke Math. J.},
   volume={164},
   date={2015},
   number={13},
   pages={2577--2595},
}

\bib{lovasz-discrete-analytic-survey}{article}{
   author={Lov{\'a}sz, L{\'a}szl{\'o}},
   title={Discrete analytic functions: an exposition},
   conference={
      title={Surveys in differential geometry. Vol. IX},
   },
   book={
      series={Surv. Differ. Geom., IX},
      publisher={Int. Press, Somerville, MA},
   },
   date={2004},
   pages={241--273},
}
\end{biblist}
\end{bibdiv}

\vfill
\noindent Gabor Lippner,\\
Department of Mathematics, Northeastern University,
 Boston, MA, USA;
 
\noindent\texttt{g.lippner@neu.edu}
\vspace{1ex}

\noindent Dan Mangoubi,\\ 
Einstein Institute of Mathematics,
Hebrew University,
Jerusalem,
Israel;

\noindent\texttt{mangoubi@math.huji.ac.il}
\end{document}